\numberwithin{equation}{section} \overfullrule 5pt
\newdimen\tableauside\tableauside=1.0ex
\newdimen\tableaurule\tableaurule=0.4pt
\newdimen\tableaustep
\def\phantomhrule#1{\hbox{\vbox to0pt{\hrule height\tableaurule
width#1\vss}}}
\def\phantomvrule#1{\vbox{\hbox to0pt{\vrule width\tableaurule
height#1\hss}}}
\def\sqr{\vbox{%
  \phantomhrule\tableaustep

\hbox{\phantomvrule\tableaustep\kern\tableaustep\phantomvrule\tableaustep}%
  \hbox{\vbox{\phantomhrule\tableauside}\kern-\tableaurule}}}
\def\squares#1{\hbox{\count0=#1\noindent\loop\sqr
  \advance\count0 by-1 \ifnum\count0>0\repeat}}
\def\tableau#1{\vcenter{\offinterlineskip
  \tableaustep=\tableauside\advance\tableaustep by-\tableaurule
  \kern\normallineskip\hbox
    {\kern\normallineskip\vbox
      {\gettableau#1 0 }%
     \kern\normallineskip\kern\tableaurule}%
  \kern\normallineskip\kern\tableaurule}}
\def\gettableau#1 {\ifnum#1=0\let\next=\null\else
  \squares{#1}\let\next=\gettableau\fi\next}
\newcommand{\be}{\begin{equation}}
\newcommand{\ee}{\end{equation}}
\newcommand{\bea}{\begin{eqnarray}}
\newcommand{\eea}{\end{eqnarray}}
\newcommand{\ba}{\begin{array}}
\newcommand{\ea}{\end{array}}
\newcommand{\id}{\hbox{1\kern-.27em l}}
\newcommand{\al}{\alpha}
\newcommand{\bet}{\beta}
\newcommand{\la}{\lambda}
\newcommand{\cN}{\mathcal{N}}
\newcommand{\non}{\nonumber}
\newcommand{\SO}{\mathrm{SO}}
\newcommand{\Sp}{\mathrm{Sp}}
\newcommand{\Spin}{\mathrm{Spin}}
\newtheorem{Pro}{Proposition}
\newtheorem{Def}{Definition}
\theoremstyle{definition}
\title{Construction of the Symbol Invariant of Partition}
\author[B. Shou]{B. Shou$^{\dag}$}
\author[Qiao Wu]{Qiao Wu$^*$}
\address{$^*$ College of Logistics and E-commerce \\Zhejiang Wanli University\\
No.8 South Qianhu Road, Ningbo 315100, P.R.China}
\address{$\dag$Center  of Mathematical  Sciences\\
Zhejiang University \\
Hangzhou 310027,  China}
\email{$^{\dag}$ bsoul@zju.edu.cn, \quad $^*$10920005@zju.edu.cn}
\subjclass[2010]{05E10}
\keywords{Partition, symbol, construction, surface operator,
Young diagram}
\begin{document}
\begin{abstract} 
Symbol is used to describe the Springer correspondence for the classical groups. We prove two structure theorems of symbol.  We propose  a   construction of the symbol of the  rigid partitions in the  $B_n$, $C_n$, and $D_n$ theories.  This construction   is natural and  consists of two basic building blocks.   Using this construction,  we give  closed formulas of symbols for the  rigid partitions in the  $B_n, C_n$, and $D_n$ theories.  One part of the closed formula   is    universal and other  parts are determined by the specific theory.   A comparison of between   this closed formula and the old one is made. Previous  results can be    illustrated  more clearly by this  closed formula.
\end{abstract}
\maketitle

\tableofcontents

\section{Introduction}
A partition $\la$ of the positive integer $n$ is a decomposition $\sum_{i=1}^l \la_i = n$  ($\la_1\ge \la_2 \ge \cdots \ge \la_l$).  The integer $l$  is called the length of the partition. Partitions can be graphically visualized with Young diagrams.  For instance the partition $3^22^31$  corresponds to
\be
\tableau{2 5 7}\non
\ee
Young diagrams  occur in a number of branches of mathematics and physics, including the study of combinatorics, representation theory. They  are also useful tools for the construction of  the eigenstates of Hamiltonian System \cite{Shou:2011} \cite{{Shou:2014}} \cite{{Shou:2015}} and  label  the fixed point of the localization of path integral\cite{Localization}.

The Springer correspondence \cite{Collingwood:1993} is a injective map from the unipotent conjugacy classes of a simple group to the set of unitary representations of the Weyl group.   For the classical groups this map can be described explicitly in terms of partitions.  Unipotent conjugacy  of the (complexified) gauge group are classified by partitions. For Weyl group, irreducible unitary representations are in one-to one correspondence with ordered pairs of partitions $[\al;\bet]$.  $\al$ is a partition of $n_\al$ and $\bet$ is a partition of $n_{\beta}$ satisfying    $n_\al+n_\bet= n$.   Symbols introduced in \cite{Collingwood:1993} can be used to  describe the Springer correspondence for the classical groups, which  is a map related to the partitions on the two sides of the Springer correspondence.

Gukov and Witten  initiated to  study the $S$-duality for surface operators in $\cN=4$ super Yang-Mills theories  in \cite{Gukov:2006}\cite{Witten:2007}\cite{Gukov:2008}. A surface operator is defined by prescribing  a certain singularity structure of fields near the surface on which the operator is supported.  They are labelled by pairs of certain partitions.  The $S$-duality conjecture suggest that surface operators in  the theory with gauge group $G$ should have a counterpart  in the Langlands dual group $G^{L}$. A subclass of surface operators called {\it 'rigid'} surface operators   are expected to be closed under $S$-duality. Symbol is an invariant for the dual pair partition.  Using  symbol invariant, Wyllard made some explicit proposals for how the $S$-duality map should act on rigid surface operators in \cite{Wyllard:2009}. In \cite{Shou 1:2016}, we find a new subclass of rigid surface operators related by S-duality. We also simplify the construction of symbol invariant of partition. In \cite{Shou 2:2016}, we generalize the above method and propose a closed formula for the symbol of rigid partitions in the  $B_n, C_n$, and $D_n$ theories  uniformly.

In this paper, we attempt to extend the analysis in \cite{Gukov:2006}\cite{Wyllard:2009}\cite{Shou 1:2016} \cite{Shou 2:2016} to study the construction of symbol further. Firstly, we characterize the structure of symbol by two theorems\ref{L}\ref{ss}. We found a new method to construct the symbol, which is more elementary  and essential. This construction consist of two building blocks '\textbf{ Rule B }' and '\textbf{ Rule A}'.  Finally, using these building blocks, we proposed  closed formulas of symbol for partitions in  the $B_n, C_n$, and $D_n$ theories\ref{Ppb}\ref{Ppc}\ref{Ppd}.

The following is an outline of  this article.  In Section 2, we  introduce the concept of partition and some basic results related to the rigid partition. Then we introduce the definition of symbol  proposed in \cite{Shou 2:2016}.  We prove two   theorems  related to  the   structure  of  symbol.  In Section 3, we found  two elementary  building blocks in the construction of symbol. Then we extend these  two elementary  building blocks.  Using the structure theorems of symbol,  we can decompose the partition into several parts and calculate the contribution to symbol of each part independently.   Finally, according to these  decomposition   rules of partition, we give close formulas of symbols for rigid partitions in  the $B_n, C_n$ and $D_n$ theories. In Section 4, we make a comparison  of between the   closed formula in this paper and  the one in \cite{Shou 2:2016}.

With noncentral rigid conjugacy classes in $A_n$  and  more complicated exceptional groups,  we will concentrate on the $B_n, C_n$, and $D_n$ series in this paper. Other aspects of surface operators have been studied in \cite{Shou 1:2016}\cite{Gomis:2007}\cite{Drukker:2008}\cite{Gukov:2014}\cite{Sh06}.  The mismatch of the total number of rigid surface operators was found in the $B_n/C_n$ theories  through the study of the generating functions\cite{Henningson:2007a}\cite{Henningson:2007b}\cite{Wyllard:2008}\cite{Wyllard:2007}. There is another invariant of partition called fingerprint related to the Kazhdan-Lusztig map\cite{Spaltenstein:1992}\cite{Lusztig:1984}\cite{Symbol 2},  assumed to be  equivalent to the symbol invariant. Hopefully our constructions will be helpful in the proof of the equivalence. Clearly more work is required. Hopefully our constructions will be helpful in making further progress.

\section{Symbol  of  partition}
In this section, we introduce partition and relevant  results. We give   the definition of symbol and prove two structure theorems of symbol.
\subsection{Partitions in the  $B_n, C_n$, and $D_n$ theories}
Firstly, we will introduce the definition of symbol. They   are not identical  for  the  $B_n$, $C_n$, and $D_n$ theories. So we try to find equivalent  definitions which are convenient to study. We also introduce  formal  operations of  a  partition, which  are not only helpful to understand the new definitions of symbol but also helpful for the proof in  Section 3.


For the $B_n$($D_n$)theories, unipotent conjugacy  classes  are in one-to-one correspondence with partitions of $2n{+}1$($2n$) where all even integers appear an even number of times.  For the $C_n$ theories, unipotent conjugacy  classes  are in one-to-one correspondence with partitions $2n$ for which all odd integers appear an even number of times.  If it has no gaps (i.e.~$\la_i-\la_{i+1}\leq1$ for all $i$) and no odd (even) integer appears exactly twice, a partition in the $B_n$ or $D_n$ ($C_n$) theories is called {\it rigid}. We will focus on rigid partition in this paper.

The following propositions are pointed out in \cite{Wyllard:2009}, which are  used  to study the structure of symbol.   We give the  details of the  proof for the $B_n$ theory.
\begin{Pro}{\label{Pb}}
The longest row in a rigid  $B_n$ partition always contains an odd number of boxes. The following two rows of the first row are either both of odd length or both of even length.  This pairwise pattern then continues. If the Young tableau has an even number of rows the row of shortest length has to be even.
\end{Pro}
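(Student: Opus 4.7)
The plan is to encode the partition through its sequence of maximal runs of equal parts and argue by a parity count. Write
\[
\lambda \;=\; (v_1^{m_1}, v_2^{m_2}, \ldots, v_p^{m_p}), \qquad v_1 > v_2 > \cdots > v_p \ge 1,
\]
where $m_i \ge 1$ is the multiplicity of $v_i$. The no-gaps condition $\lambda_i - \lambda_{i+1} \le 1$ forces $v_{i+1} = v_i - 1$, so consecutive run values differ by exactly one and hence their parities alternate along the sequence. The $B_n$ conditions become: (i) if $v_i$ is even then $m_i$ is even, and (ii) if $v_i$ is odd then $m_i \ne 2$. The only further input is the global identity $\sum_i v_i m_i = 2n+1$, whose odd parity I plan to exploit.

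The key reformulation is that all three claims of the proposition are equivalent to the single statement: $m_1$ is odd and $m_i$ is even for every $i \ge 2$. Indeed, a parity change in the row-length sequence can occur only at a run boundary, i.e.\ at one of the partial sums $s_i := m_1 + \cdots + m_i$, and the pairwise pattern asserted by the proposition asks precisely that every such $s_i$ ($i = 1, \ldots, p-1$) be odd, which is equivalent to the stated multiplicity pattern. Under this pattern, the first conclusion $\lambda_1 = v_1$ odd follows from (i), since $v_1$ even would force $m_1$ even; the third conclusion (on even $l$) is handled because $l = s_p$ is automatically odd when all the multiplicity parities comply, so the only remaining room for an even $l$ is when the last run has even $v_p$, in which case $\lambda_l = v_p$ is indeed even as asserted.

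To establish the repackaged multiplicity claim, I would combine the rigidity constraints with the parity of $|\lambda| = 2n+1$. Since even-valued runs contribute even amounts to the sum, the identity collapses mod $2$ to $\sum_{v_i\text{ odd}} m_i \equiv 1 \pmod 2$. For even $v_i$ there is nothing to check (by (i) they already have even $m_i$); for the odd-valued runs I proceed inductively on $i$, carrying along the inductive hypothesis that $s_{i-1}$ is odd, and then deducing the parity of $m_i$ from (ii) together with the running tally of odd contributions. The main obstacle is precisely this inductive step: because (ii) excludes only the single multiplicity $m_i = 2$ on odd-valued runs, ruling out the remaining odd multiplicities $\{1, 3, 5, \ldots\}$ cannot be done by local rigidity alone, and one must marshal the global sum-parity together with the forced alternation of $v_i$-parities inherited from the no-gaps condition in order to close the induction.
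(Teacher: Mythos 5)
The proposal founders on a misreading of what the paper means by a \emph{row}. In this paper (following Wyllard's conventions) the rows of the Young tableau of $\lambda=m^{n_m}(m-1)^{n_{m-1}}\cdots 1^{n_1}$ are the parts of the \emph{conjugate} partition: the $k$th row has length $L(k)=\sum_{i\ge k}n_i$, so the longest row has length $l$, the total number of parts of $\lambda$. This is exactly how the paper's own argument runs: it deduces ``the longest row is odd'' from ``the length of the partition is odd,'' and it identifies the difference of the lengths of rows $2$ and $3$ with the multiplicity of the part $2$. You instead take the $j$th row to be $\lambda_j$ itself. Under that reading the proposition is false: $\lambda=2^4 1$ is a rigid $B_4$ partition of $9$ (the even part $2$ occurs $4$ times, there are no gaps, and the odd part $1$ occurs once, not twice), yet its largest part is $2$, which is even. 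The same example refutes your ``key reformulation'' that $m_1$ is odd and $m_i$ is even for all $i\ge 2$: here $m_1=4$ and $m_2=1$. So the obstacle you flag at the end --- that rigidity together with the parity of $|\lambda|$ cannot rule out odd multiplicities on the later runs --- is not a technical gap awaiting a cleverer induction; the statement you are inducting on is simply not true, and no argument will close it.

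Under the correct reading the proof is short and is essentially the paper's. The difference of the lengths of rows $k$ and $k+1$ is $L(k)-L(k+1)=n_k$, the multiplicity of the part $k$; for even $k$ this is even by the defining $B_n$ condition, which is precisely the asserted pairwise pattern for the row pairs $(2,3),(4,5),\dots$. The first row has length $l$, which is odd because the even parts occur an even number of times while the odd parts must occur an odd number of times in total (their sum is congruent to $2n+1$ modulo $2$). Finally, if the number of rows $m$ is even, the shortest row has length $L(m)=n_m$ with $m$ even, hence even. Your parity bookkeeping with the partial sums $s_i$ is not wasted --- it is in effect a description of the conjugate partition, whose parts are exactly the $s_i$ --- but the conclusion you need is about the parities of the $n_k$ for even $k$, which rigidity hands you for free, not about the parities of all the $m_i$, which it does not.
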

\begin{proof} For a partition in the $B_n$ theory,  even integers appear an even number of times.  So the sum of all odd integers is odd,    implying   the  number of odd integers is odd. Thus  the length of the partition is odd, which is the sum of the number of odd integers and  even integers. So the longest row in a rigid $B_n$ partition   contains an odd number of boxes.

If the following two rows of the first row are of different parities, then the difference of the length between these two rows is odd. It imply that part  '2' appears odd number of times  in the partition,  a  contradiction. So  the following two rows are either both of odd length or both of even length. Similarly, we can prove    next two rows are of  the same parities. This pairwise pattern continues.

The number of   boxes  of a pairwise pattern is even. If the Young tableau has an even number of rows, then the number of the total boxes of  the first row and the last row is old.  The shortest row is even,  since the longest row in a   $B_n$ partition always contains an odd number of boxes.
\end{proof}
\begin{flushleft}
\textbf{Remark:} If the last row is odd, the number of rows in the partition is odd.
\end{flushleft}

For partitions in the $C_n$ and $D_n$ theories, we have the following  propositions.
\begin{Pro}{\label{Pc}}
For a rigid $C_n$ partition, the longest two rows  both contain either  an even or an odd number  number of boxes.  This pairwise pattern then continues. If the Young tableau has an odd number of rows the row of shortest length has contain an even number of boxes.
\end{Pro}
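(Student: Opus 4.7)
The plan is to mimic the proof of Proposition \ref{Pb}, adapting the parity counting to the $C_n$ setting. Here the total sum equals $2n$, every odd part appears with even multiplicity, and rigidity forbids gaps as well as any even part having multiplicity exactly two.

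First I would handle the two longest rows. If $\la_1 = \la_2$ there is nothing to show; otherwise the no-gap condition forces $\la_1 = \la_2 + 1$, so they have opposite parities. The case $\la_1$ odd is immediate: because $\la_1 > \la_2$, the value $\la_1$ would appear exactly once, contradicting the even-multiplicity requirement on odd parts. To eliminate the remaining case where $\la_1$ is even and $\la_2$ is odd, I would follow the forced chain: $\la_2$ odd implies $\la_2 = \la_3$, and the induced multiplicity pattern must be traced far enough down to produce a clash with either the no-exactly-twice condition on even parts or the fixed total $2n$.

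For the pairwise pattern, once $\la_1 = \la_2$ is established I would delete those two rows and apply induction to the residual partition of $2n - 2\la_1$, observing that removing two equal parts keeps odd-part multiplicities even and does not introduce a forbidden ``exactly twice'' even multiplicity, so the residual is again a rigid $C_{n'}$ partition. For the last-row parity when the number of rows is odd, I would use that each pair of equal-parity rows contributes an even sum, so the leftover row at the bottom has the same parity as the total $2n$, forcing it to be even.

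The main obstacle is precisely the subcase $\la_1$ even, $\la_2 = \la_1 - 1$ odd in the first step. In the proof of Proposition \ref{Pb}, the odd total $2n+1$ immediately forces $\la_1$ to be odd; here the even total $2n$ gives no such shortcut, so I expect the argument to require following the multiplicity pattern several rows down and combining the no-exactly-twice restriction with the balance of odd and even blocks before the rigidity and parity constraints clash.
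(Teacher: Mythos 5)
Your proposal rests on a reading of ``rows'' that the paper does not use, and under your reading the statement you are trying to prove is actually false --- which is why the ``main obstacle'' you identify cannot be overcome. In Propositions \ref{Pb}--\ref{Pd} the $k$th row of the Young tableau is the $k$th part of the \emph{transposed} partition, i.e.\ it has length $L(k)=\sum_{i\ge k}n_i=\#\{i:\la_i\ge k\}$. This is forced by the paper's proof of Proposition \ref{Pb}, which identifies the longest row with the length $l$ of the partition and identifies the difference of the second and third row lengths with the multiplicity of the part $2$; it is also the convention used in Proposition \ref{L} and the figures. You instead take the two longest rows to be the two largest parts $\la_1,\la_2$. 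With that reading the first assertion already fails for the rigid $C_2$ partition $21^2=(2,1,1)$ (the odd part $1$ has even multiplicity, there are no gaps, and no even part occurs exactly twice), where $\la_1=2$ and $\la_2=1$ have opposite parities; your troublesome subcase ``$\la_1$ even, $\la_2=\la_1-1$ odd'' is therefore genuinely realizable, not eliminable. A secondary flaw: in your induction step, deleting two equal even parts from a part of multiplicity $4$ leaves that part with multiplicity exactly $2$, so the residual partition need not be rigid.

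The intended argument is the direct analogue of the paper's proof of Proposition \ref{Pb}. Consecutive row lengths of the (transposed) tableau differ by $n_k$, the multiplicity of the part $k$; for a $C_n$ partition every odd $k$ has $n_k$ even, so rows $1$ and $2$ have equal parity, rows $3$ and $4$ have equal parity, and so on --- exactly the pairwise pattern, which here starts from the longest row. Each such pair contributes an even number of boxes and the total is $2n$, so if the number of rows ($=\la_1$) is odd, the single unpaired shortest row must have even length. Your parity-of-the-total argument for the last row is the right idea, but it only becomes available once the pairing is established on the correct objects.
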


\begin{Pro}{\label{Pd}}
For a rigid $D_n$ partition, the longest row  always contains an even number of boxes. And the following two rows are either both of even length or both of odd length. This pairwise pattern then continue. If the Young tableau has an even number of rows the row of the shortest length has to be even.
\end{Pro}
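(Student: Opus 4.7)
The plan is to mirror the argument for Proposition~\ref{Pb} nearly verbatim, reversing the relevant parities throughout because the total size is now $2n$ rather than $2n{+}1$. I would address the three clauses of the proposition in sequence.

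First, for the claim that the longest row has even length, I would show that the length of the partition is even. In a $D_n$ partition every even integer appears with even multiplicity, so the even parts contribute an even total to the sum $2n$; hence the sum of the odd parts is also even, and since each odd part is itself odd, the number of odd parts must then be even. The number of even parts (counted with multiplicity) is automatically even, and so the total length is even. Identifying length with the longest row as in the proof of Proposition~\ref{Pb} yields the first assertion.

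Second, for the pairwise parity pattern I would reuse the $B_n$ argument directly. Suppose two consecutive rows after the first had opposite parities; then the no-gap condition $\la_i - \la_{i+1}\le 1$ would force the difference of their lengths to equal~$1$, and reading off multiplicities from the Young diagram would force the integer $2$ (or, for a pair further down, an analogous even value) to appear an odd number of times in the partition. This contradicts the $D_n$ even-multiplicity rule for even parts. Iterating the same reasoning for every subsequent pair propagates the pattern all the way down the diagram.

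Finally, suppose the diagram has an even number of rows. Every completed parity pair contributes an even total number of boxes, and the whole diagram has even size $2n$; combined with the already-established fact that the longest row is even, the only remaining row (the unpaired shortest one) must also have even length, exactly parallel to the $B_n$ argument. The main obstacle I anticipate is the middle step: making rigorous how a single-unit drop between consecutive rows translates into an odd multiplicity of a specific even part requires the same careful bookkeeping between row lengths and column heights that underlies the $B_n$ proof, and is the one place where the convention about reading the diagram matters.
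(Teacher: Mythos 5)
Your proof is essentially the paper's intended argument: the paper proves only the $B_n$ case (Proposition~\ref{Pb}) in detail and leaves \ref{Pd} to the same parity bookkeeping, which you carry out correctly — even total $2n$ minus an even sum of even parts forces an even number of odd parts, hence even length, and the even pair-totals plus the even first row force the last unpaired row to be even. One correction to your middle step: the no-gap condition is not what is used, and the difference of two consecutive rows of the displayed (transposed) tableau need not equal $1$; that difference is exactly the multiplicity $n_j$ of the part $j$, so opposite parities for the rows of a pair would directly make $n_j$ odd for an even $j$, contradicting the $D_n$ even-multiplicity condition — this is the same mechanism as in the paper's $B_n$ proof and requires no appeal to rigidity.
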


\subsection{Definition   and the  structure theorem  of  symbol}
It is convenient to use certain symbols to describe the Springer correspondence for the classical groups \cite{Collingwood:1993}.
The following  definitions of symbol is proposed in \cite{Shou 1:2016},  making the differences of definitions among  different theories   as small as possible.
\begin{Def}{\cite{Shou 1:2016}}\label{Dn}
\begin{itemize}
  \item $B_n$:   add $l-k$ to the $k$th part of the partition.
Arrange the odd parts of the sequence $l-k+\lambda_k$ in an increasing sequence $2f_i+1$  and arrange the even  parts in an increasing sequence $2g_i$.
Next calculate the following  terms
 \begin{equation*}
   \al_i = f_i-i+1\quad\quad\quad  \bet_i = g_i-i+1.
 \end{equation*}
 Finally we  write the {\it symbol} as
\begin{equation}\label{Db}
  \left(\ba{@{}c@{}c@{}c@{}c@{}c@{}c@{}c@{}} \al_1 &&\al_2&&\al_3&& \cdots \\ &\bet_1 && \bet_2 && \cdots  & \ea \right).
\end{equation}

  \item $C_n$: if the length of the partition is odd,  append an extra 0 as the last part of the partition. Then compute the symbol as in the $B_n$  case.
  \item $D_n$:  append an extra 0 as the last part of the partition, then compute the symbol as in the $B_n$  case.
\end{itemize}
\end{Def}

\begin{flushleft}
  \textbf{Remarks:}
  \begin{enumerate}
    \item  $l$ is the length of the partition  including the extra  0 appended   if needed  for the computation of the symbol.  The first step of the computation of  symbol in the $B_n$, $C_n$, and $D_n$ theories.
\begin{center}
\begin{tabular}{|r||c|c|c|c|}\hline
        $\lambda_{k}:$           & $\lambda_1$ & $\lambda_{2}$ &$\cdots$ & $\lambda_l$     \\ \hline
 $ l-k: $         & $l-1$ &$ l-2 $&$\cdots$ &$ 0 $                                         \\   \hline
 $l-k+\lambda_{k}: $& $l-1+\lambda_1$ &$ l-2+\lambda_2$&$\cdots $& $\lambda_l$              \\  \hline
\end{tabular}
\end{center}
  This table will be used frequently. The  terms   $f_i$  and   $g_i$   are calculated as follows
 \begin{center}
\begin{tabular}{|r||c|c|c|c|}\hline
  $2f_i+1: $      & $\cdots$ & $2f_2+1$ & $2f_1+1$   \\ \hline
    $ f_i:  $     & $\cdots $ & $f_2 $   & $f_1 $    \\ \hline
\end{tabular}
\qquad
\begin{tabular}{|r||c|c|c|}\hline
 $ 2g_i:$     & $\cdots $ & $2g_2$  &  $2g_1$ \\ \hline
   $ g_i:$      & $\cdots$ & $g_2$   &  $g_1$ \\ \hline
\end{tabular}
\end{center}
The sequences  $f_i$ and   $g_i$  increase from  right to left, corresponding to  sequences $\alpha_i$ and $\beta_i$,  respectively.
\item Since each term in the  sequence  $l-k+\lambda_{k}$  are independent, we can discuss the contribution to symbol  of each term $l-k+\lambda_{k}$ independently.
  \end{enumerate}
\end{flushleft}


Note that the form of the symbol (\ref{Db}) is a convenient notation. The relative positions between $\alpha_*$ and $\beta_*$  is not essential. We  prove the following structure theorem of symbol.
\begin{Pro}[Structure of the  Symbol]{\label{L}}  Let the  $k$th and $k+1$th rows are two rows of a pairwise pattern of the  partition $\lambda=m^{n_m}{(m-1)}^{n_{m-1}}\cdots{1}^{n_1}$.   If the length of the  $k$th row is even, the contribution to symbol of the parts $m^{n_m}{(m-1)}^{n_{m-1}}\cdots{k}^{n_k}$  is
\begin{equation}\label{Le}
  \Bigg(\!\!\!\ba{c}0\;\;0\cdots 0 \;\;  \alpha_{\mu-\frac{1}{2}L(k)+1}  \cdots \; \alpha_\mu  \\
\;\;\;0\cdots 0 \;\;\beta_{\nu-\frac{1}{2}L(k)+1} \cdots \; \beta_{\nu}\ \ea \Bigg)
\end{equation}
with $L(k)=\sum^{m}_{i=k} n_{i}$.

Let  the length of the $k$th row is old and is the second row  of a pairwise pattern. $l$ is the length of the partition  including the extra  0 appended   if needed  for the computation of the symbol. If $l-L(k)+\lambda_{L(k)}$ is odd, the contribution to symbol of the parts $m^{n_m}{(m-1)}^{n_{m-1}}\cdots{k}^{n_k}$  is
\begin{equation}\label{Lot}
  \Bigg(\!\!\!\ba{c}0\;\;0\;\cdots\; \alpha_{\mu-\frac{L(k)-1}{2}} \;\;  \alpha_{\mu-\frac{L(k)-1}{2}+1} \; \cdots \; \alpha_\mu  \\
\;\;\;0\;\;\cdots\;\; \;0 \;\;\beta_{\nu-\frac{L(k)-1}{2}+1} \; \cdots \; \beta_{\nu}\ \ea \Bigg)
\end{equation}
and the contribution to symbol of the parts $m^{n_m}{(m-1)}^{n_{m-1}}\cdots{{(k-1)}}^{n_{k-1}}$  is
\begin{equation}\label{Lot2}
  \Bigg(\!\!\!\ba{c}0\;\;0\;\cdots\; \alpha_{\mu-\frac{L(k-1)-1}{2}} \;\;  \alpha_{\mu-\frac{L(k-1)-1}{2}+1} \; \cdots \; \alpha_\mu  \\
\;\;\;0\;\;\cdots\;\; \;0 \;\;\beta_{\nu-\frac{L(k-1)-1}{2}+1} \; \cdots \; \beta_{\nu}\ \ea \Bigg).
\end{equation}
If  $l-L(k)+\lambda_{L(k)}$ is  even, the contribution to symbol of the parts $m^{n_m}{(m-1)}^{n_{m-1}}\cdots{k}^{n_k}$  is
\begin{equation}\label{Lob}
  \Bigg(\!\!\!\ba{c}0\;\;\;\;0\;\;\cdots \;\;0 \;\;  \alpha_{\mu-\frac{L(k)-1}{2}+1} \;\; \cdots \;\;\; \alpha_\mu  \\
\;\;\;0\;\cdots \;\beta_{\nu-\frac{L(k)-1}{2}} \;\;\beta_{\nu-\frac{L(k)-1}{2}+1} \;\;\cdots \;\; \beta_{\nu}\;\; \ea \Bigg).
\end{equation}
and the contribution to symbol of the parts $m^{n_m}{(m-1)}^{n_{m-1}}\cdots{(k-1)}^{n_{k-1}}$  is
\begin{equation}\label{Lob2}
  \Bigg(\!\!\!\ba{c}0\;\;\;\;0\;\;\cdots \;\;0 \;\;  \alpha_{\mu-\frac{L(k-1)-1}{2}+1} \;\; \cdots \;\;\; \alpha_\mu  \\
\;\;\;0\;\cdots \;\beta_{\nu-\frac{L(k-1)-1}{2}} \;\;\beta_{\nu-\frac{L(k-1)-1}{2}+1} \;\;\cdots \;\; \beta_{\nu}\;\; \ea \Bigg).
\end{equation}
\end{Pro}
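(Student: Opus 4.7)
The plan is to track how the top $L(k)$ values of the modified sequence $w_i := l-i+\lambda_i$ distribute by parity into the increasing sequences $\{2f_j+1\}$ and $\{2g_j\}$ that build the $\alpha_j = f_j-j+1$ and $\beta_j = g_j-j+1$ of the symbol. The first observation is that $w_i$ is strictly decreasing in $i$, since $w_i - w_{i+1} = 1 + \lambda_i - \lambda_{i+1} \ge 1$. Hence the $L(k)$ largest values among $w_1, \ldots, w_l$ are precisely $w_1 > w_2 > \cdots > w_{L(k)}$, namely those coming from the parts $m^{n_m}\cdots k^{n_k}$. After parity-separation and increasing arrangement, these top values must occupy the rightmost (largest-index) slots of the $f$- and $g$-sequences; translated via $\alpha_j = f_j-j+1$ and $\beta_j = g_j-j+1$, they fill the rightmost entries of the $\alpha$- and $\beta$-rows of the symbol, with zeros on the left (to be filled in later by the remaining, smaller parts).

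The second step is to count how many of $w_1, \ldots, w_{L(k)}$ are odd and how many even. Invoking Propositions \ref{Pb}, \ref{Pc}, \ref{Pd}, within every complete pair of the pairwise pattern $\lambda_i$ and $\lambda_{i+1}$ share parity, while $l-i$ and $l-(i+1)$ differ by $1$, so $w_i$ and $w_{i+1}$ carry opposite parities. Thus each complete pair contributes exactly one value to $\{2f_j+1\}$ and one to $\{2g_j\}$. For case (\ref{Le}), the hypothesis that row $L(k)$ has even length and sits at the end of a pair means rows $1, \ldots, L(k)$ decompose cleanly into $\tfrac{1}{2}L(k)$ complete pairs (the lone first row in the $B_n$ and $D_n$ theories being absorbed into the counts through the parity of $l$), so this block yields exactly $\tfrac{1}{2}L(k)$ new $f$-values and $\tfrac{1}{2}L(k)$ new $g$-values --- precisely the block of $\alpha$'s and $\beta$'s displayed in (\ref{Le}).

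For (\ref{Lot}) and (\ref{Lob}) the $k$th row is odd and is the second of a pair, so rows $1, \ldots, L(k)$ still decompose into complete pairs; the remaining information is the parity of $w_{L(k)} = l-L(k)+\lambda_{L(k)}$. If $w_{L(k)}$ is odd, the top block produces one extra $f$-value relative to its $g$-values, shifting the $\alpha$-block one step further left as in (\ref{Lot}); if $w_{L(k)}$ is even the opposite imbalance yields (\ref{Lob}). The formulas (\ref{Lot2}) and (\ref{Lob2}) then follow by running the same counting one block further: the extra rows of value $k-1$ form another batch of complete pairs, each pair still contributing one odd and one even $w$-value, so the asymmetry persists and the block widens symmetrically from $\tfrac{1}{2}(L(k)-1)$ to $\tfrac{1}{2}(L(k-1)-1)$ on each side.

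The main obstacle will be the careful bookkeeping at the boundary row $L(k)$, along with the theory-dependent treatment of the lone first row (in $B_n, D_n$) and of the appended zero (in $C_n, D_n$). Strict monotonicity of $w$ is essential to guarantee that the top $L(k)$ values map to the rightmost $(f, g)$-slots without interleaving with contributions from smaller parts. A cleaner presentation likely proceeds by induction on the number of pairs lying above row $L(k)$, using the pair-parity observation as the induction step and handling the lone first row as a small base-case correction.
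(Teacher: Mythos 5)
Your first step is sound and is a cleaner formulation of what the paper uses implicitly: since $w_i=l-i+\lambda_i$ is strictly decreasing, the parts $\ge k$ produce the $L(k)$ largest terms of the sequence, hence after parity separation they occupy the rightmost slots of the $f$- and $g$-sequences, and the whole proposition reduces to counting how many of $w_1,\dots,w_{L(k)}$ are odd. The gap is in that count. Your pairing claim --- that within every complete pair $\lambda_i$ and $\lambda_{i+1}$ share parity, so each pair contributes exactly one value to $\{2f_j+1\}$ and one to $\{2g_j\}$ --- misapplies Propositions \ref{Pb}--\ref{Pd}: the pairwise pattern there concerns the lengths $L(j)=\sum_{i\ge j}n_i$ of the rows of the tableau (in the paper's transposed convention), not consecutive parts $\lambda_i,\lambda_{i+1}$. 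Consecutive parts need not share parity, and when they do not, $w_i$ and $w_{i+1}$ have the \emph{same} parity. Concretely, for the rigid $B_8$ partition $\lambda=3^32^21^4$ one has $l=9$ and $w=(11,10,9,7,6,4,3,2,1)$; the pair $(\lambda_3,\lambda_4)=(3,2)$ gives $w_3=9$ and $w_4=7$, both odd, so this ``pair'' contributes two $f$-values and no $g$-value. With your accounting (two balanced pairs plus the leftover $w_5=6$) the parts $3^32^2$ would yield two $\alpha$'s and three $\beta$'s, whereas the correct contribution is three $\alpha$'s and two $\beta$'s.

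The count that actually works --- and this is what the paper's proof carries out --- pairs consecutive \emph{equal} parts inside each block $j^{n_j}$, where $w$ genuinely alternates in parity, and then does the bookkeeping at the block boundaries where some $n_j$ is odd; Propositions \ref{Pb}--\ref{Pd} enter only to control which $n_j$ can be odd, and the parity of $l-L(k)+\lambda_{L(k)}$ decides whether the single unpaired part at such a boundary lands in the $f$- or the $g$-sequence. That boundary analysis is precisely the content distinguishing (\ref{Le}), (\ref{Lot}) and (\ref{Lob}), and it is the part your argument asserts (``the remaining information is the parity of $w_{L(k)}$'') without proving. To repair the proof you would need to replace the consecutive-index pairing by the within-block pairing and run the induction over blocks, which lands you essentially on the paper's argument.
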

\begin{proof} For the parts $\lambda_i, \lambda_{i+1}(\lambda_i=\lambda_{i+1})$, one of the  terms $l-i+\lambda_i, l-i+\lambda_{i+1} $ is odd  corresponding to $\alpha_*$ and the other  one is even corresponding to $\beta_*$.

If the last row  is not in a pairwise  pattern, then it is  even according to Proposition \ref{Pb}, \ref{Pc}, and \ref{Pd}.  For the first two parts $l-1+\lambda_1, l-2+\lambda_{2} (\lambda_1=\lambda_2=m)$, the odd term   is the biggest odd one  of and the even term  is the biggest even one of the sequence $l-i+\lambda_i$,   corresponding to $\alpha_\mu$ and  $\beta_\nu$, respectively. So  the contribution to symbol of the parts $(\lambda_1,\lambda_2)$ is
\begin{equation*}
  \Bigg(\!\!\!\ba{c}0\;\;0\cdots 0 \;\;  0  \cdots \;0\; \alpha_\mu  \\
\;\;\;0\cdots 0 \;\;0 \cdots \;0\; \beta_{\nu}\ \ea \Bigg).
\end{equation*}
This pairwise pattern of contribution to symbol will continue for the  parts  $m^{n_m}$. And  the contribution to symbol of the   parts $m^{n_m}$ is
 \begin{equation}\label{nm1}
  \Bigg(\!\!\!\ba{c}0\;\;0\cdots 0 \;\;  \alpha_{\mu-n_m/2+1}  \cdots \; \alpha_\mu  \\
\;\;\;0\cdots 0 \;\;\beta_{\nu-n_m/2+1} \cdots \; \beta_{\nu}\ \ea \Bigg)
\end{equation}
where the number of $\alpha_*$ is equal to that of  $\beta_*$ in the above formula.

\begin{figure}[!ht]
  \begin{center}
    \includegraphics[width=4.5in]{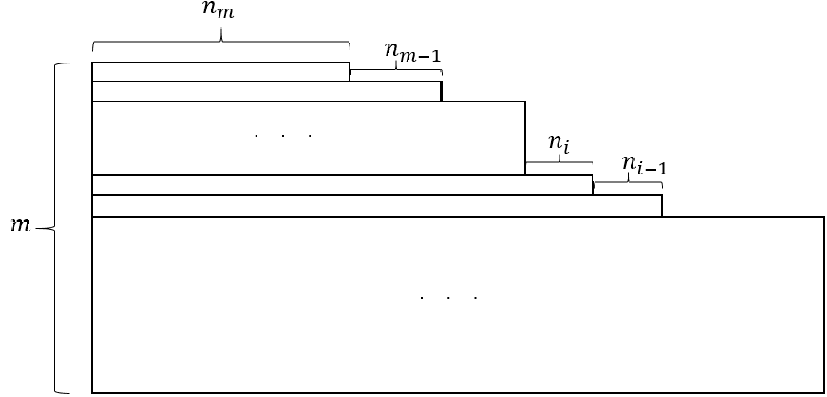}
  \end{center}
  \caption{The last two  rows of the Young tableau are in a pairwise  pattern. $n_m$ and $n_{m-1}$ are even.  Both the length of the  $i$th  and $(i-1)$th rows are odd.  $n_i$ is old  and $n_{i-1}$ is even.}
  \label{Stru}
\end{figure}
According to  remark 3 of Definition \ref{Dn}, the contributions to symbol of the  parts $m^{n_m}$ is independent of  the   parts ${(m-1)}^{n_{m-1}}\cdots{k}^{n_k}$ of the  partition $\lambda$. Without loss of generality,  we assume the last two rows are in a  pairwise patten. If these two rows  are even,  then both $n_{m}$ and  $n_{m-1}$ are even as shown in Fig.\ref{Stru}. By  repeating the procedure reaching the formula (\ref{nm1}),  the contribution to symbol   of the  parts $m^{n_m}$  is given by the formula (\ref{nm1}). In the same way, the contributions to symbol of  the parts $\lambda_{n_1+1}\cdots \lambda_{n_1+n_2}=(m-1)^{n_{m-1}}$ is
\begin{equation}\label{nm2}
  \Bigg(\!\!\!\ba{c}0\;\;0\cdots 0 \;\;  \alpha_{\mu-n_m/2-(n_{m-1})/2 +1} \cdots  \alpha_{\mu-n_m/2} \;\;\overbrace{ 0 \cdots  0 }^{n_m/2}\\
\;\;\;0\cdots 0 \;\; \beta_{\nu-n_m/2-(n_{m-1})/2 +1} \cdots  \beta_{\nu-n_m/2} \;\underbrace{0 \cdots  0}_{n_m/2} \ \ea \Bigg).
\end{equation}
Combing the  formulas (\ref{nm2}) and  (\ref{nm1}), we draw the conclusion of the  formula (\ref{Le}) for the parts $m^{n_m}{(m-1)}^{n_{m-1}}$.

The pattern of the formula (\ref{nm2}) will continue until an odd number $n_i$ which means both the lengths of the  $i$th  and $(i-1)$th rows are odd as shown in Fig.\ref{Stru}.
Since the contribution to symbol  of the parts  ${m}^{n_{m}}\cdots{(i-1)}^{n_{i-1}}$ is independent of the rest parts of the partition $\lambda$, without loss of generality, we assume the length of the last two rows are odd,  which means  $n_m$ is odd  and  $n_{m-1}$ is even as shown in Fig.\ref{Ev}.  We make a decomposition $m^{n_m}=m^{n_m-1}+m$. For the  parts $\lambda_1\cdots\lambda_{n_m-1}=m^{n_m-1}$,  repeating the procedure  reaching the formula (\ref{nm1}),  its contribution to symbol is
  \begin{equation}\label{nm1o}
  \Bigg(\!\!\!\ba{c}0\;\;0\cdots 0 \;\;  \alpha_{\mu-(n_m-1)/2+1}  \cdots \; \alpha_\mu  \\
\;\;\;0\cdots 0 \;\;\beta_{\nu-(n_m-1)/2+1} \cdots \; \beta_{\nu}\ \ea \Bigg).
\end{equation}

\begin{figure}[!ht]
  \begin{center}
    \includegraphics[width=4.5in]{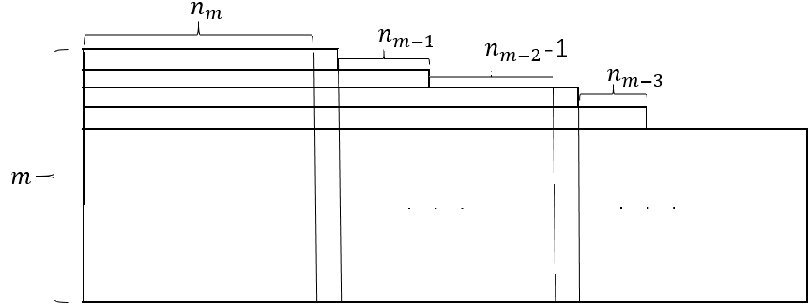}
  \end{center}
  \caption{The last two  rows  are old and in a pairwise  pattern. $n_m$ is old  and $n_{m-1}$ is even. Both the  ${m-2}$th  and $n_{m-3}$th rows are old.   $n_{m-2}$ is old  and $n_{m-3}$ is even.}
  \label{Ev}
\end{figure}
\begin{itemize}
  \item If $l-n_m+ \lambda_{n_m}$ is odd,   the  contribution to symbol of part $l-n_m+ \lambda_{n_m}$ is
\begin{equation}\label{nm11o}
\Bigg(\!\!\!\ba{c}0\;\;0\cdots 0 \;\;  0 \cdots \alpha_{\mu-(n_m-1)/2}\;\;0 \cdots  0 \\
\;\;\;0\cdots 0 \;\;0 \cdots \quad\quad 0\quad\quad \;0 \cdots  0 \ \ea \Bigg),
\end{equation}
which is the  formula (\ref{Lot}).
For the  parts $\lambda_{n_m+1}\cdots \lambda_{n_m+n_{m-1}}=(m-1)^{n_{m-1}}$, according to  the formula (\ref{nm2}), its contribution to symbol is
\begin{equation}\label{nm1m2o}
  \Bigg(\!\!\!\ba{c}0\;\;0\cdots 0 \;\;  \alpha_{\mu-(n_m-1)/2-n_{m-1}/2} \cdots  \alpha_{\mu-(n_m-1)/2-1} \;\; \overbrace{0 \cdots  0}^{(n_m+1)/2} \\
\;\;\;0\cdots 0 \;\; \beta_{\mu-(n_m-1)/2-n_{m-1}/2+1} \cdots  \beta_{\mu-(n_m-1)/2} \;\underbrace{0 \cdots  0}_{(n_m-1)/2} \ \ea \Bigg).
\end{equation}
Combing the formulas (\ref{nm1o}),  (\ref{nm11o}), and (\ref{nm1m2o}), we draw the conclusion of the  formula (\ref{Lot2}).

The pattern  of the formula (\ref{nm1m2o}) will continue until an odd $n_i$, which means both the $i$th  and $i-1$th rows are even. Without loss of generality, we assume both the length of the $(m-2)$th and $(m-3)$th  rows are even.  Then  $n_{m-2}$ is odd  and $n_{m-3}$ is even as shown in Fig.\ref{Ev}.
We make a decomposition as follows
$$\lambda_{n_m+n_{m-1}+1}\cdots\lambda_{n_m+n_{m-1}+n_{m-2}}=\lambda_{n_m+n_{m-1}+1}+\lambda_{n_m+n_{m-1}+2}\cdots\lambda_{n_m+n_{m-1}+n_{m-2}},$$
which is equal to $(m-2)^{n_{m-2}}=(m-2)+(m-2)^{n_{m-2}-1}$. Since   $l-n_m+ \lambda_{n_m}$ is odd and $n_{m-1}$ is even,
the following term $$l-(n_m+n_{m-1}+1)+ \lambda_{n_m+n_{m-1}+1}=l-n_m+ (\lambda_{n_m}-2)-(n_{m-1}+1)$$
is even.  The contribution  to symbol of the part $\lambda_{n_m+n_{m-1}+1}$ is
\begin{equation}\label{nm11oe}
\Bigg(\!\!\!\ba{c}0\;\;0\cdots 0 \;\;  0 \cdots  \quad\quad\quad 0\quad\quad \quad \;\;0 \cdots  0 \\
\;\;\;0\cdots 0 \;\;0 \cdots \beta_{\nu-(n_m+n_{m-1}+1)/2}  \;0 \cdots  0 \ \ea \Bigg).
\end{equation}
By combing the formulas (\ref{nm1o}), (\ref{nm1m2o}), (\ref{nm11o}),  and (\ref{nm11oe}),  the contribution to symbol of the parts $\lambda_1\cdots\lambda_{n_m+n_{m-1}}\lambda_{n_m+n_{m-1}+1}$ is
 \begin{equation}\label{nm1loall}
  \Bigg(\!\!\!\ba{c}0\;\;0\cdots 0 \;\;  \alpha_{\mu-(n_m+n_{m-1}+1)/2}  \cdots \; \alpha_\mu  \\
\;\;\;0\cdots 0 \;\;\beta_{\nu-(n_m+n_{m-1}+1)/2} \cdots \; \beta_{\nu}\ \ea \Bigg),
\end{equation}
which is the formula (\ref{Le}).
For the parts $\lambda_{n_m+n_{m-1}+2}\cdots \lambda_{n_m+n_{m-1}+n_{m-2}}=(m-2)^{n_{m-2}-1}$, repeating the procedure  reaching  the formula (\ref{nm1}), its contribution to symbol is
\begin{equation}\label{nm1m2oo}
  \Bigg(\!\!\!\ba{c}0\;\;0\cdots 0 \;\;  \alpha_{\mu-(n_m+n_{m-1}+n_{m-2})/2} \cdots  \alpha_{\mu-(n_m+n_{m-1}+1)/2} \;\; \overbrace{0 \cdots  0}^{(n_m+n_{m-1}+1)/2} \\
\;\;\;0\cdots 0 \;\; \beta_{\mu-(n_m+n_{m-1}+n_{m-2})/2} \cdots  \beta_{\nu-(n_m+n_{m-1}+1)/2} \;\underbrace{0 \cdots  0}_{(n_m+n_{m-1}+1)/2} \ \ea \Bigg).
\end{equation}
Similarly, for the parts $\lambda_{n_m+n_{m-1}+n_{m-2}+1}\cdots \lambda_{n_m+n_{m-1}+n_{m-2}+n_{m-3}}=(m-3)^{n_{m-3}}$,  its contribution to symbol is
\begin{equation}\label{nm1m2o13}
  \Bigg(\!\!\!\ba{c}0\;\;0\cdots 0 \;\;  \alpha_{\mu-(n_m+n_{m-1}+n_{m-2}+n_{m-3})/2} \cdots  \alpha_{\mu-(n_m+n_{m-1}+n_{m-2}+1)/2} \;\; \overbrace{0 \cdots  0}^{(n_m+n_{m-1}+n_{m-2})/2} \\
\;\;\;0\cdots 0 \;\; \beta_{\mu-(n_m+n_{m-1}+n_{m-2}+n_{m-3})/2} \cdots  \beta_{\nu-(n_m+n_{m-1}+n_{m-2}+1)/2} \;\underbrace{0 \cdots  0}_{(n_m+n_{m-1}+n_{m-2})/2} \ \ea \Bigg).
\end{equation}
Combing the  formulas (\ref{nm1loall}), (\ref{nm1m2oo}), and (\ref{nm1m2o13}), we get the formula (\ref{Le}). The pattern  of formula (\ref{nm1m2o13}) will continue until an odd $n_i$,  which means both the length of the  $i$th  and $(i-1)$th rows are old. This return back at the beginning of the proof.
  \item
If  $l-n_m+ \lambda_{n_m}$ is even,  the  contribution  to symbol of the  part $\lambda_{n_m}=m$ is
\begin{equation}\label{nm11e}
\Bigg(\!\!\!\ba{c}0\;\;0\cdots 0 \;\;  0 \cdots \quad\quad  0\quad\quad  \;\;0 \cdots  0 \\
\;\;\;0\cdots 0 \;\;0 \cdots  \beta_{\nu-(n_m-1)/2}\;0 \cdots  0 \ \ea \Bigg).
\end{equation}
For the parts $\lambda_{n_m+1}\cdots \lambda_{n_m+n_{m-1}}=(m-1)^{n_{m}-1}$,  its contribution to symbol is
\begin{equation}\label{nm1m2e}
   \Bigg(\!\!\!\ba{c}0\;\;0\cdots 0 \;\;  \alpha_{\mu-(n_m-1)/2-n_{m-1}/2+1} \cdots  \alpha_{\mu-(n_m-1)/2} \;\; \overbrace{0 \cdots  0}^{(n_m-1)/2} \\
\;\;\;0\cdots 0 \;\; \beta_{\mu-(n_m-1)/2-n_{m-1}/2} \cdots  \beta_{\mu-(n_m-1)/2-1} \;\underbrace{0 \cdots  0}_{(n_m+1)/2} \ \ea \Bigg).
\end{equation}
We   draw the conclusion of formula (\ref{Lob}). The other process of the proof is   similar to the case that $l-n_m+ \lambda_{n_m}$ is old.
\end{itemize}

\end{proof}
\begin{flushleft}
  \textbf{Remark:} According to the proof,  the total number   of  $\alpha_*$ and $\beta_*$ is  $l$.
\end{flushleft}

By using the above theorem,  the   symbol  has following  concise form. Here we prove the theorem using the structure theorem Proposition \ref{L}.
\begin{Pro}{\cite{Shou 1:2016}}\label{ss}
The symbol of  the partition $\lambda=m^{n_m}{(m-1)}^{n_{m-1}}\cdots{1}^{n_1}$ in the  $B_n, C_n$, and $D_n$ theories has the following compact form
 \be\label{Dt}
    \boxed{\left(\ba{@{}c@{}c@{}c@{}c@{}c@{}c@{}c@{}c@{}c@{}} \alpha_1 &&  \alpha_2 &&\cdots  &&  \alpha_m  \\ & \beta_1 && \cdots && \beta_{m+t} && & \ea \right)}
\ee
where  $m=(l+(1-(-1)^l)/2)/2$ and  $l$ is the length of the partition.   $t=-1$ for the $B_n$ theory,  $t=0$  for the $C_n$ theory,  and  $t=1$ for the  $D_n$ theory.
\end{Pro}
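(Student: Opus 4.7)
My plan is to derive the compact form (\ref{Dt}) as a direct consequence of Proposition \ref{L}. Applying Proposition \ref{L} iteratively (or once with $k=1$), each pairwise block of two rows of equal parity contributes exactly one $\alpha$-entry and one $\beta$-entry to the symbol: the formulas (\ref{Le}), (\ref{Lot2}) and (\ref{Lob2}) all show that absorbing a new pair of rows extends each of the two rows of the subsymbol by exactly one slot. An unpaired row contributes a single extra entry, which is an $\alpha$ when the associated number $l - k + \lambda_k$ is odd and a $\beta$ when it is even. Consequently the two row-lengths of the symbol are completely determined by counting pairs and unpaired rows of each parity type, and those counts are read off from Propositions \ref{Pb}, \ref{Pc}, and \ref{Pd}.

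The argument then proceeds theory by theory. For $B_n$, Proposition \ref{Pb} says $l$ is odd and the longest row is unpaired with odd length, followed by $(l-1)/2$ pairs; since $l-1$ is even and $\lambda_1$ is odd, $l - 1 + \lambda_1$ is odd, so row $1$ contributes an extra $\alpha$, giving $\alpha$-count $= (l+1)/2$ and $\beta$-count $= (l-1)/2$, which matches $m$ and $m+t$ with $t=-1$. For $C_n$, Proposition \ref{Pc} together with the 0-padding rule forces $l$ to be even and pairs up every row (any unpaired even row at the bottom pairs with the appended $0$), so $\alpha$-count $= \beta$-count $= l/2 = m$ and $t=0$. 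For $D_n$, Proposition \ref{Pd} and the appended $0$ leave a configuration whose unpaired residue falls on the $\beta$-side: a parity check on the longest row (even) and on the appended $0$ shows that exactly one more $\beta$ than $\alpha$ is produced, giving $t=1$.

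The main delicate step is the bookkeeping for $D_n$: one has to verify that after appending the $0$ the nonzero rows arrange themselves into pairs consistent with Proposition \ref{Pd}, and that the unpaired contribution from row $1$ (together with the residual effect of the appended zero) lands on the $\beta$-side rather than the $\alpha$-side. Both of these reduce to parity computations on $l - 1 + \lambda_1$ and on $l - l + 0 = 0$ using the parity constraints imposed by Proposition \ref{Pd}. Once that is pinned down, the three-way statement of Proposition \ref{ss} follows immediately by collecting the contributions enumerated by Proposition \ref{L}.
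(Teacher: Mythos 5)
Your overall strategy --- reduce the count of $\alpha$'s versus $\beta$'s to Proposition \ref{L} together with the parity constraints of Propositions \ref{Pb}--\ref{Pd}, then argue theory by theory --- is the same as the paper's. However, the local counting claims you build on are false, so there is a genuine gap. First, formulas (\ref{Le}), (\ref{Lot2}), (\ref{Lob2}) do not say that absorbing a new pair of rows "extends each of the two rows of the subsymbol by exactly one slot": they describe the \emph{cumulative} contribution of all parts $m^{n_m}\cdots k^{n_k}$, which grows by $n_k+n_{k+1}$ entries per pairwise pattern, and the usable conclusion is only about the $\alpha$/$\beta$ imbalance at a pattern boundary (zero when the pattern ends in an even row, $\pm 1$ when it ends in an odd row, the sign depending on the parity of $l-L(k)+\lambda_{L(k)}$). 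Second, a pair of consecutive parts contributes one $\alpha$ and one $\beta$ only when the two parts are equal; a rigid partition allows $\lambda_i-\lambda_{i+1}=1$, in which case $l-i+\lambda_i$ and $l-(i+1)+\lambda_{i+1}$ have the \emph{same} parity and both land in the same row of the symbol. Third, in the $B_n$ case the largest part $\lambda_1$ need not be odd --- Proposition \ref{Pb} asserts that the longest \emph{row of the tableau}, i.e.\ the number of parts $l$, is odd --- so your parity check on $l-1+\lambda_1$ does not locate the extra $\alpha$.

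The rigid $B_2$ partition $\lambda=2^21$ breaks all three steps at once: $l=3$, the sequence $l-k+\lambda_k$ is $(4,3,1)$, and the symbol is $\left(\begin{smallmatrix}0& &0\\ &2& \end{smallmatrix}\right)$. Here $l-1+\lambda_1=4$ is even, so the unpaired first part contributes the single $\beta$, while the pair $(\lambda_2,\lambda_3)=(2,1)$ contributes two $\alpha$'s; your bookkeeping would instead predict one $\alpha$ and two $\beta$'s. The paper avoids this by never localizing the excess to the first part: it reads the imbalance of $m^{n_m}\cdots 2^{n_2}$ off (\ref{Lot2}) or (\ref{Le}) according to the parity of the second and third rows, handles the tail $1^{n_1}$ separately using the parity of $n_1$ forced by Propositions \ref{Pb}--\ref{Pd}, and lets the appended $0$ supply $\beta_1=0$ in the $C_n$ and $D_n$ cases. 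Your $D_n$ case is in any event only sketched, and the parity of $l-1+\lambda_1$ is not determined there either (e.g.\ $\lambda=2^4$ has $\lambda_1$ even while other rigid $D_n$ partitions have $\lambda_1$ odd), so the deferred "delicate step" cannot be completed by the method you propose.
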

\begin{proof} \begin{itemize}
                  \item Let $\lambda$ is a partition in the $B_n$ theory.
                  \begin{enumerate}
                  \item If both the second and third rows are old, $l-L(3)+\lambda_{L(3)}$ is old.   According to the formula (\ref{Lot2}), the parts $\lambda_1\cdots\lambda_{L(2)}$ contribute one more  $\alpha_*$ than  $\beta_*$.  Since the first row is odd,  $n_1$ is even. The parts $\lambda_{L(2)+1}\cdots\lambda_{L(1)}=1^{n_1}$ contribute equal  number of   $\alpha_*$ and $\beta_*$.
                      \item If both the second and third rows are even, according to the  formula (\ref{Le}), the parts $\lambda_1\cdots\lambda_{L(2)}$ contribute equal number of    $\alpha_*$ and  $\beta_*$.  Since the first row is odd,  $n_1$ is odd.  We can decompose the parts $\lambda_{L(2)+1}\cdots\lambda_{L(1)}=1^{n_1}$ into two parts $\lambda_{L(2)+1}\cdots\lambda_{L(1)-1}=1^{n_1-1}$ and $\lambda_{L(1)}=1$. The former one  contribute equal  number of    $\alpha_*$ and $\beta_*$.  The latter one corresponds to  $\alpha_1$.
                          \end{enumerate}

                  \item Let $\lambda$ is a partition in the $C_n$ theory.
                  \begin{enumerate}
                                            \item If both the first two rows are odd, we append an extra 0 as the last part of the partition. The part $\lambda_{L(1)+1}=0$ corresponds to  $\beta_1=0$. $l-L(2)+\lambda_{L(2)}$  is old.  According to formula (\ref{Lot2}), the parts $\lambda_1\cdots\lambda_{L(2)}$ contribute equal number of   $\alpha_*$ and  $\beta_*$. Since the first row is even,  $n_1$ is even. The parts $\lambda_{L(2)+1}\cdots\lambda_{L(1)}=1^{n_1}$ contribute equal  number of   $\alpha_*$ and $\beta_*$. Since the  part $\lambda_{L(1)+1}=0$ corresponds to $\beta_1=0$,  there are equal number of $\alpha_*$ and $\beta_*$.
                                            \item If both the first two  rows are even, according to  the formula (\ref{Le}), the parts $\lambda_1\cdots\lambda_{L(1)}$ contribute equal number of   $\alpha_*$ and  $\beta_*$.
                                          \end{enumerate}
                  \item Let $\lambda$ is a partition in the $D_n$ theory. We append an extra 0 as the last part of partition. The part $\lambda_{L(1)+1}=0$ corresponds to $\beta_1=0$.
                    \begin{enumerate}
                  \item If both the second and third rows are old, $l-L(3)+\lambda_{L(3)}$ is old.   According to the formula (\ref{Lot2}), the parts $\lambda_1\cdots\lambda_{L(2)}$ contribute one more  $\alpha_*$ than  $\beta_*$.  Since the first row is even,  $n_1$ is old. We can decompose the parts $\lambda_{L(2)+1}\cdots\lambda_{L(1)}=1^{n_1}$ into two parts $\lambda_{L(2)+1}\cdots\lambda_{L(1)-1}=1^{n_1-1}$ and $\lambda_{L(1)}=1$. The former one  contribute equal  number of    $\alpha_*$ and $\beta_*$ and the latter one corresponds to  $\beta_*$. Since the  part $\lambda_{L(1)+1}=0$ corresponds to $\beta_1=0$,   the total number of  $\beta_*$  is one more than that of $\alpha_*$.
                  \item If both the second and third rows are even, according to the  formula (\ref{Le}), the parts $\lambda_1\cdots\lambda_{L(2)}$ contribute equal number of    $\alpha_*$ and  $\beta_*$.  Since the first row is even,  $n_1$ is even. The parts $\lambda_{L(2)+1}\cdots\lambda_{L(1)}=1^{n_1}$ contribute equal  number of   $\alpha_*$ and $\beta_*$. Since the  part $\lambda_{L(1)+1}=0$ corresponds to $\beta_1=0$,   the total number of  $\beta_*$  is one more than that of $\alpha_*$.
                          \end{enumerate}

                \end{itemize}
\end{proof}

\section{Construction of  symbol }
Firstly, we  introduce two formal operations of a partition,  described   as \textbf{ 'Rule A' } and \textbf{ 'Rule B'}. These formal  operations  can be used to derive  the closed formulas of symbols in the  $B_n$, $C_n$, and $D_n$  theories. Let $l$ be the length of the partition  including the extra  0 appended   according to Definition \ref{Dn}.

\subsection{Building blocks of  symbol }
 The contribution to  symbol of the part $\lambda_i$ can be seen as  the sum of the contribution of each box of it formally.   We decompose  $\lambda_i$ into several  parts whose    contribution to symbol are easy to compute.

Adding a column $2^1$ to the part $\lambda_i$,   we  discuss what happen to the \textit{corresponding} entry in the  symbol.
\begin{itemize}
\item If $l-i+\lambda_i$ is even,   we have
$$l-i+\lambda_i+2=2g_{i_a}+2=2(g_{i_a}+1)=2g^{'}_{i_a}.$$
Let $g^{'}_{i_a}$ be the notation $g_i$ for the part $\lambda_i+2^1$, which means $\beta^{'}_{i_a}=\beta_{i_a}+1$.
\item  If $l-i+\lambda_i$ is  odd,   we have
$$l-i+\beta_i+2=2f_{i_b}+1+1=2(f_{i_b}+1)+1=2f^{'}_{i_b}$$
which means $\alpha^{'}_{i_b}=\alpha_{i_b}+1$.
\end{itemize}
We can summary the above results as the following rule
\begin{flushleft}
 \textbf{ Rule A: } After  adding two boxes to  the  part $\lambda_i$,   the  \textit{corresponding} entry  of  the symbol  increase by  one.
\end{flushleft}

Next,  adding a row $1^2$ to  parts $\lambda_i,\lambda_{i+1}(\lambda_i=\lambda_{i+1})$,  we  discuss what happen to the corresponding entries of the  symbol.
\begin{itemize}
\item If  $l-i+\lambda_i$ is even, we have
\begin{equation*}
l-i+\lambda_i=2g_{i_a}, \quad\quad  l-(i+1)+\lambda_{i+1}=2f_{i_b}+1.
\end{equation*}
The first term corresponds to $\beta_{i_a}$  and  the second one  corresponds to $\alpha_{i_b}$.    Since $\lambda_i=\lambda_{i+1}$, we have
\begin{equation*}
  l-(i+1)+\lambda_{i+1}=2f_{i_b}+1=2(g_{i_a}-1)+1
\end{equation*}
which means $f_{i_b}=g_{i_a}-1$.
After  adding a row $1^2$ to  parts $\lambda_i,\lambda_{i+1}(\lambda_i=\lambda_{i+1})$, we have
\begin{equation*}
l-i+\lambda_i+1=2g_{i_a}+1, \quad\quad  l-(i+1)+\lambda_{i+1}+1=2(f_{i_b}+1).
\end{equation*}
Substituting  $f_{i_b}=g_{i_a}-1$ into the above identities,  we obtain
\begin{equation*}
 l-i+\lambda_i+1=2(f_{i_b}+1)+1, \quad\quad  l-(i+1)+\lambda_{i+1}+1=2g_{i_a}
\end{equation*}
which means $\beta^{'}_{i_a}=\beta_{i_a}$ and $\alpha^{'}_{i_b}=\alpha_{i_b}+1$.
\item  If $l-i+\lambda_i$ is  odd, similarly,   we have
$$\beta^{'}_{i_a}=\beta_{i_a}+1, \quad\quad \alpha^{'}_{i_b}=\alpha_{i_b}.$$
\end{itemize}
We can summary   the above results as the following rule.
\begin{flushleft}
 \textbf{ Rule B: } After adding  a row $1^2$ to the   parts $\lambda_i,\lambda_{i+1}(\lambda_i=\lambda_{i+1})$ formally, the entry of the symbol corresponding to  $\lambda_{i+1}$  increase by  one  while  the entry corresponding to  $\lambda_{i}$ do not change.
\end{flushleft}

 '\textbf{Rule A}' and '\textbf{Rule B}' are of prime importance.  Using these rules, we prove formulas  which are the   building  blocks of the closed formula of the symbol.
 After adding a row $1^{2m}$  to the parts  $\lambda_{i} \cdots \lambda_{i+2m-1}(\lambda_{i}=\cdots=\lambda_{i+2m-1})$,  we  discuss what happen to the  corresponding entries in the  symbol.
\begin{itemize}
  \item If $l-i+\lambda_i$ is odd, the sequence
$$( l-i+\lambda_i, l-i-2+\lambda_{i+2}, \cdots,  l-i-2m+2 + \lambda_{i+2m-2})$$
correspond to entries  $(\alpha_{i_a} \cdots \alpha_{i_{a+m-1}})$ from right to left  in the top row of the  symbol while
$$(l-i-1+\lambda_{i+1}, l-i-3+\lambda_{i+3}, \cdots,  l-i-2m+1 + \lambda_{i+2m-1})$$
corresponds to $(\beta_{i_a} \cdots \beta_{i_{a+m-1}})$  from right to left in the bottom  row of the symbol.  By using '\textbf{Rule B}' $m$ times,   these $2m$ boxes   contribute  to symbol as follows
\begin{equation}\label{brb}
\Bigg(\!\!\!\ba{c}0\;\;0\cdots 0 \;\; \overbrace{ 0 \cdots  0}^{m} \;\;0 \cdots  0 \\
\;\;\;0\cdots 0 \;\;\underbrace{ 1 \cdots  1}_{m}\;0 \cdots  0 \ \ea \Bigg)
\end{equation}
  \item If $l-i+\lambda_i$ is even, the sequence
$$( l-i+\lambda_i, l-i-2+\lambda_{i+2}, \cdots,  l-i-2m+2 + \lambda_{i+2m-2})$$
correspond to entries  $(\beta_{i_a} \cdots \beta_{i_{a+m-1}})$ from right to left in the bottom row of symbol while
$$(l-i-1+\lambda_{i+1}, l-i-3+\lambda_{i+3}, \cdots,  l-i-2m+1 + \lambda_{i+2m-1})$$
correspond to  $(\alpha_{i_a} \cdots \alpha_{i_{a+m-1}})$ in the top row of symbol.
After adding a row with $1^{2m}$ boxes, using '\textbf{Rule B}' $m$ times,  we  find that these $2m$ boxes   contribute  to symbol as follows
\begin{equation}\label{brt}
  \Bigg(\!\!\!\ba{c}0\;\;0\cdots 0 \;\; \overbrace{ 1 \cdots  1}^{m} \;\; 0 \cdots  0 \\
\;\;\;0\cdots 0 \;\;\underbrace{ 0 \cdots  0}_{m}\;0 \cdots  0 \ \ea \Bigg)
\end{equation}
\end{itemize}

The formulas (\ref{brb}) and (\ref{brt}) are two extended  versions of '\textbf{Rule B}'. Next  we introduce another two  formulas combing  '\textbf{Rule A}' and '\textbf{Rule B}'.  For  the    parts  $\lambda_{i}, \cdots, \lambda_{i+2m}(\lambda_{i}=\cdots=\lambda_{i+2m})$, we discuss what happen to  the symbol    after adding a partition  $21^{2m}$.    We  add the column $2$ to $\lambda_i$ firstly, and then add the row $1^{2m}$ to the  parts $\lambda_{i+1}, \cdots, \lambda_{i+2m-1},\lambda_{i+2m}$.
\begin{itemize}
  \item  If  $l-i+\lambda_i$ is odd, it must  corresponds to a term $\alpha_{i_b}$. After adding the column $2$, using '\textbf{Rule A}', we have $\alpha^{'}_{i_b}=\alpha_{i_b}+1$.

  The sequence
$$(  l-i-2+\lambda_{i+2}, \cdots,  l-i-2m + \lambda_{i+2m})$$
corresponds to entries  $(\alpha_{i_{b+1}}, \cdots, \alpha_{i_{b+m}})$ from right to left  in the top row of symbol while
$$(l-i-1+\lambda_i, l-i-3+\lambda_{i+3}, \cdots,  l-i-2m+1 + \lambda_{i+2m-1})$$
corresponds to $(\beta_{i_a} \cdots \beta_{i_{a+m-1}})$ from right to left in the bottom row of symbol. After  adding  the row $1^{2m}$, using '\textbf{Rule B}' $m$ times, we have
$(\alpha_{i_{b+1}}+1,  \cdots,  \alpha_{i_{b+m}}+1)$.
Thus, the partition $21^{2m}$  contribute to symbol as follows
\begin{equation}\label{bzt}
 \Bigg(\!\!\!\ba{c}0\;\;0\cdots 0 \;\; \overbrace{ 1 \cdots  1}^{m+1} \;\; 0 \cdots  0 \\
\;\;\;0\cdots 0 \;\;\underbrace{ 0 \cdots  0}_{m+1}\;0 \cdots  0 \ \ea \Bigg).
\end{equation}

  \item If  $l-i+\lambda_i$ is even, it must   corresponds to a term $\beta_{i_a}$. After adding the column $2$, using '\textbf{Rule A}', we have $\beta^{'}_{i_a}=\beta_{i_a}+1$.

  The sequence
$$(  l-i-2+\lambda_{i+2}, \cdots,  l-i-2m + \lambda_{i+2m})$$
corresponds to entries  $(\beta_{i_{a+1}}, \cdots, \beta_{i_{a+m}})$ from right to left in the bottom row of symbol while
$$(l-i-1+\lambda_i, l-i-3+\lambda_{i+3}, \cdots,  l-i-2m+1 + \lambda_{i+2m-1})$$
corresponds to $(\alpha_{i_b} \cdots \alpha_{i_{b+m-1}})$  in the top row of symbol. After adding  the row $1^{2m}$, using '\textbf{Rule B}' $m$ times, we have
$(\beta_{i_{a+1}}+1,  \cdots,  \beta_{i_{a+m}}+1)$.
Thus, the partition $21^{2m}$  make a contribution to symbol as follows
\begin{equation}\label{bzb}
\Bigg(\!\!\!\ba{c}0\;\;0\cdots 0 \;\; \overbrace{ 0 \cdots  0}^{m+1} \;\;0 \cdots  0 \\
\;\;\;0\cdots 0 \;\;\underbrace{ 1 \cdots  1}_{m+1}\;0 \cdots  0 \ \ea \Bigg).
\end{equation}
\end{itemize}

We can summary above results as follows
\begin{center}
\begin{tabular}{|c|c|c|c|}\hline
\multicolumn{4}{|c|}{ Contributions   of the  parts added to $\lambda_i\cdots\lambda_{i+2m}$ or $\lambda_i\cdots\lambda_{i+2m+1}$} \\ \hline
Type &  Parts  & Parity of $l-i+\lambda_i$ & Contribution to symbol    \\ \hline
eo&$1^{2m}$ & odd  & $\Bigg(\!\!\!\ba{c}0\;\;0\cdots 0 \;\; \overbrace{ 0 \cdots  0}^{m} \;\;0 \cdots  0 \\
\;\;\;0\cdots 0 \;\;\underbrace{ 1 \cdots  1}_{m}\;0 \cdots  0 \ \ea \Bigg)$  \\ \hline
ee&$1^{2m}$ & even  & $\Bigg(\!\!\!\ba{c}0\;\;0\cdots 0 \;\; \overbrace{ 1 \cdots  1}^{m} \;\; 0 \cdots  0 \\
\;\;\;0\cdots 0 \;\;\underbrace{ 0 \cdots  0}_{m}\;0 \cdots  0 \ \ea \Bigg)$    \\ \hline
oo&$1^{2m+1}$ & odd  & $ \Bigg(\!\!\!\ba{c}0\;\;0\cdots 0 \;\; \overbrace{ 1 \cdots  1}^{m+1} \;\; 0 \cdots  0 \\
\;\;\;0\cdots 0 \;\;\underbrace{ 0 \cdots  0}_{m+1}\;0 \cdots  0 \ \ea \Bigg)$   \\ \hline
oe&$1^{2m+1}$ & even  & $\Bigg(\!\!\!\ba{c}0\;\;0\cdots 0 \;\; \overbrace{ 0 \cdots  0}^{m+1} \;\;0 \cdots  0 \\
\;\;\;0\cdots 0 \;\;\underbrace{ 1 \cdots  1}_{m+1}\;0 \cdots  0 \ \ea \Bigg)$     \\ \hline
\end{tabular}
\end{center}

Now, we introduce another  building block of the  construction of the symbol. We discuss what happen to the  symbol  after adding a partition $2^2$ to the  parts $\lambda_i,\lambda_{i+1}(\lambda_i=\lambda_{i+1})$. We  add the column $2$ to $\lambda_i$, and then add the column $2$ to $\lambda_{i+1}$. If the parts $\lambda_i,\lambda_{i+1}$ corresponds to the entries $\alpha_{i_a},\beta_{i_b}$ in the symbol, parts $\lambda_i+2,\lambda_{i+1}+2$ correspond to the entries $\alpha_a+1,\beta_b+1$ by using '\textbf{Rule A}' twice. If the parts $\lambda_i,\lambda_{i+1}$ corresponds to the entries $\beta_{i_a},\alpha_{i_b}$ in the symbol,  parts $\lambda_i+2,\lambda_{i+1}+2$ correspond to the entries $\beta_{i_a}+1,\alpha_{i_b}+1$ by using '\textbf{Rule B}' twice. In any case,  the partition $2^2$  contribute to symbol as follows
\begin{equation}\label{b22}
\Bigg(\!\!\!\ba{c}0\;\;0\cdots 0 \;\;  0 \cdots  1 \;\;0 \cdots  0 \\
\;\;\;0\cdots 0 \;\; 0 \cdots  1\;0 \cdots  0 \ \ea \Bigg).
\end{equation}
In another way, we  add the row $1^2$ to $\lambda_i,\lambda_{i+1}$ firstly, and then add the row $1^2$. By using '\textbf{Rule B}' twice,  the contribution to symbol  of $2^2$ is  consistent with the formula (\ref{b22}).

Next we introduce an extended version of the  formula (\ref{b22}). We add a  partition $2^{2r}$  to parts $\lambda_i,\lambda_{i+1}(\lambda_i=\lambda_{i+1})$. If the  parts $\lambda_i,\lambda_{i+1}$ corresponds to none zero entries in the symbol as follows
\begin{equation*}
\Bigg(\!\!\!\ba{c}0\;\;0 \cdots 0 \;   \cdots  0 \; * \;\;0 \cdots  0 \\
\;\;\;0\cdots 0 \;  \cdots 0 \; *\;0 \cdots  0 \ \ea \Bigg)
\end{equation*}
By using the  formula $(\ref{b22})$ $r$ times, the contribution to symbol  of the partition $2^{2r}$  is
 \begin{equation}\label{b22r}
\Bigg(\!\!\!\ba{c}0\;\;0 \cdots  0 \;   \cdots \; 0  \; r \;0 \cdots  0 \\
\;\;\;0\cdots 0 \; \cdots 0 \; r\;0 \cdots  0 \ \ea \Bigg).
\end{equation}
We can generalize the above formula further. We add the partition $(2m)^{2r}$  to the parts $\lambda_{i} \cdots \lambda_{i+2m-1}(\lambda_{i}=\cdots=\lambda_{i+2m-1})$. If  the parts $\lambda_{i} \cdots \lambda_{i+2m-1}$ corresponds to none zero entries in the symbol as follows
\begin{equation*}
\Bigg(\!\!\!\ba{c}0\;\;0\cdots 0 \;\;  \overbrace{* \cdots  *}^m \;\;0 \cdots  0 \\
\;\;\;0\cdots 0 \;\; \underbrace{* \cdots  *}_m \;0 \cdots  0 \ \ea \Bigg),
\end{equation*}
by using the formula $(\ref{b22})$ $r$ times, the contribution to symbol of the partition $(2m)^{2r}$  is
 \begin{equation}\label{b22r}
\Bigg(\!\!\!\ba{c}0\;\;0\cdots 0 \;\; \overbrace{ r \cdots  r}^m \;\;0 \cdots  0 \\
\;\;\;0\cdots 0 \;\; \underbrace{r \cdots  r}_m\;0 \cdots  0 \ \ea \Bigg).
\end{equation}

\subsection{Symbol of partitions in the $B_n$ theory}
First, we illustrate our strategy for the  computation of   symbol by an example.
\begin{flushleft}
\textbf{Example:}  $B_{72}$,    $\lambda=9^48^27^36^45^44^23^42^21^4$, the Young tableau is
\be\label{Yb1}
\tableau{4 6 9 13 17 19 23 25 29}\non
\ee
\end{flushleft}
 Since the number of rows in this partition is old,   according to Proposition \ref{Pb}, the last two row have the same parity.  We can decompose the above Young tableaux  into two parts formally: the first row  and the rest parts of the partition
\be
 \tableau{29}'+'\tableau{4 6 9 13 17 19 23 25}\non
\ee
$'+'$ indicate it is a formal addition. The  symbol of $\lambda$  is the sum of  the contributions   of these two parts. The first row  can be seen as a partition in the $B_n$ theory,  whose contribution to  symbol can be  computed directly
\begin{equation}\label{B1}
\left(\ba{@{}c@{}c@{}c@{}c@{}c@{}c@{}c@{}c@{}c@{}c@{}c@{}c@{}c@{}c@{}c@{}c@{}c@{}c@{}c@{}c@{}c@{}c@{}c@{}c@{}c@{}c@{}c@{}c@{}c@{}} 0&&0&&0&&0&&0&&0&&0&&0&&0&&0&&0&&0&&0&&0&&0 \\ &1 && 1 && 1 &&1&&1&&1 && 1 &&1&&1&&1 && 1 &&1&&1&&1 & \ea \right).
\end{equation}
We decompose the second Young tableaux into blocks whose contributions to symbol can be computed by the  formulas in the previous  subsection. We have the following decomposition
\begin{eqnarray}\label{Gp}
 &&  \tableau{4 6 9 13 17 19 23 25}\nonumber\\
 &=& \tableau{4 4 4 4 4 4 4 4 4} \,{'+'}\, \tableau{2 4 4 4 4 4 4} \,{'+'}\,  \tableau{1 5 8 8 8 8} \,{'+'}\, \tableau{1 3 6 6} \,{'+'}\, \tableau{1 3} \\
  &=&YB_1\,\,{'+'}\,\,YB_2\,\,{'+'}\,\,YB_3\,\,{'+'}\,\,YB_4\, \,{'+'}\,\,YB_5. \nonumber
\end{eqnarray}
We denote the five Young tableaux  in the first equality as $YB_1, YB_2, YB_3, YB_4$, and $YB_5$, respectively. Each Young tableaux  $YB_i$ can be decomposed into two parts.
\begin{itemize}
  \item The Young tableaux $YB_1$ has a decomposition
\begin{equation*}
  \tableau{4 4 4 4 4 4 4 4}=  \tableau{4 4 4 4 4 4 4 4} + \emptyset.
\end{equation*}
By using the  formula (\ref{b22r}),  the first Young tableaux on the right side  contribute to symbol as follows
\begin{equation}\label{YB11}
\left(\ba{@{}c@{}c@{}c@{}c@{}c@{}c@{}c@{}c@{}c@{}c@{}c@{}c@{}c@{}c@{}c@{}c@{}c@{}c@{}c@{}c@{}c@{}c@{}c@{}c@{}c@{}c@{}c@{}c@{}c@{}} 0&&0&&0&&0&&0&&0&&0&&0&&0&&0&&0&&0&&0&&4&&4\\ &0 && 0 && 0 &&0&&0&&0&& 0 &&0&&0&&0&& 0 &&0&&4&&4 & \ea \right).
\end{equation}
  \item The Young tableaux $YB_2$ has a decomposition
\begin{equation*}
  \tableau{2 4 4 4 4 4 4}= \tableau{4 4 4 4 4 4} + \tableau{2}
\end{equation*}
By using the formula (\ref{b22r}),  the first Young tableaux on the right side  contribute to symbol as follows
\begin{equation}\label{BY31}
\left(\ba{@{}c@{}c@{}c@{}c@{}c@{}c@{}c@{}c@{}c@{}c@{}c@{}c@{}c@{}c@{}c@{}c@{}c@{}c@{}c@{}c@{}c@{}c@{}c@{}c@{}c@{}c@{}c@{}c@{}c@{}} 0&&0&&0&&0&&0&&0&&0&&0&&0&&0&&0&&3&&3&&0&&0 \\ &0 && 0 && 0 &&0&&0&&0&& 0 &&0&&0&&0&& 3 &&3&&0&&0 & \ea \right).
\end{equation}
By using the formula (\ref{brb}),  the second one  contribute to symbol as follows
\begin{equation}\label{BY32}
\left(\ba{@{}c@{}c@{}c@{}c@{}c@{}c@{}c@{}c@{}c@{}c@{}c@{}c@{}c@{}c@{}c@{}c@{}c@{}c@{}c@{}c@{}c@{}c@{}c@{}c@{}c@{}c@{}c@{}c@{}c@{}} 0&&0&&0&&0&&0&&0&&0&&0&&0&&0&&0&&0&&0&&0&&0 \\ &0 && 0 && 0 &&0&&0&&0&& 0 &&0&&0&&0&& 1 &&0&&0&&0 & \ea \right).
\end{equation}
  \item The Young tableaux $YB_3$ has a decomposition
\begin{equation*}
  \tableau{1 5 8 8 8 8}= \tableau{8 8 8 8}+ \tableau{1 5}
\end{equation*}
By using the (\ref{b22r}),  the first Young tableaux on the right side  contribute to symbol as follows
\begin{equation}\label{BY31}
\left(\ba{@{}c@{}c@{}c@{}c@{}c@{}c@{}c@{}c@{}c@{}c@{}c@{}c@{}c@{}c@{}c@{}c@{}c@{}c@{}c@{}c@{}c@{}c@{}c@{}c@{}c@{}c@{}c@{}c@{}c@{}} 0&&0&&0&&0&&0&&0&&0&&0&&0&&4&&4&&0&&0&&0&&0 \\ &0 && 0 && 0 &&0&&0&&0&& 0 &&0&&4&&4&& 0 &&0&&0&&0 & \ea \right).
\end{equation}
By using the (\ref{bzt}),  the second one  contribute to symbol as follows
\begin{equation}\label{BY32}
\left(\ba{@{}c@{}c@{}c@{}c@{}c@{}c@{}c@{}c@{}c@{}c@{}c@{}c@{}c@{}c@{}c@{}c@{}c@{}c@{}c@{}c@{}c@{}c@{}c@{}c@{}c@{}c@{}c@{}c@{}c@{}} 0&&0&&0&&0&&0&&0&&0&&0&&1&&1&&1&&0&&0&&0&&0 \\ &0 && 0 && 0 &&0&&0&&0&& 0 &&0&&0&&0&& 0 &&0&&0&&0 & \ea \right).
\end{equation}
  \item The Young tableaux $YB_4$ has a decomposition
\begin{equation*}
  \tableau{1 3 6 6}= \tableau{6 6} + \tableau{1 3}
\end{equation*}
By using the formula (\ref{b22r}),  the first Young tableaux on the right side  contribute to symbol as follows
\begin{equation}\label{BY31}
\left(\ba{@{}c@{}c@{}c@{}c@{}c@{}c@{}c@{}c@{}c@{}c@{}c@{}c@{}c@{}c@{}c@{}c@{}c@{}c@{}c@{}c@{}c@{}c@{}c@{}c@{}c@{}c@{}c@{}c@{}c@{}} 0&&0&&0&&0&&1&&1&&1&&0&&0&&0&&0&&0&&0&&0&&0 \\ &0 && 0 && 0 &&1&&1&&1&& 0 &&0&&0&&0&& 0&&0&&0&&0 & \ea \right).
\end{equation}
By using the formula  (\ref{bzt}),  the second one  contribute to symbol as follows
\begin{equation}\label{BY32}
\left(\ba{@{}c@{}c@{}c@{}c@{}c@{}c@{}c@{}c@{}c@{}c@{}c@{}c@{}c@{}c@{}c@{}c@{}c@{}c@{}c@{}c@{}c@{}c@{}c@{}c@{}c@{}c@{}c@{}c@{}c@{}} 0&&0&&0&&0&&0&&1&&1&&0&&0&&0&&0&&0&&0&&0&&0 \\ &0 && 0 && 0 &&0&&0&&0&& 0 &&0&&0&&0&& 1 &&0&&0&&0 & \ea \right).
\end{equation}

  \item  The Young tableaux  $YB_5$ has a decomposition
\begin{equation*}
  \tableau{1 3}= \emptyset+ \tableau{1 3}
\end{equation*}
By using  the (\ref{bzt}),  the second one  contribute to symbol as follows
\begin{equation}\label{BY32}
\left(\ba{@{}c@{}c@{}c@{}c@{}c@{}c@{}c@{}c@{}c@{}c@{}c@{}c@{}c@{}c@{}c@{}c@{}c@{}c@{}c@{}c@{}c@{}c@{}c@{}c@{}c@{}c@{}c@{}c@{}c@{}} 0&&0&&1&&1&&0&&0&&0&&0&&0&&0&&0&&0&&0&&0&&0 \\ &0 && 0 && 0 &&0&&0&&0&& 0 &&0&&0&&0&& 0 &&0&&0&&0 & \ea \right).
\end{equation}
\end{itemize}
By adding the contributions  of the  five parts  $YB_1, YB_2, YB_3, YB_4$, and $YB_5$,   the  symbol of the partition $\lambda$ is
 \begin{equation}\label{adds}
\left(\ba{@{}c@{}c@{}c@{}c@{}c@{}c@{}c@{}c@{}c@{}c@{}c@{}c@{}c@{}c@{}c@{}c@{}c@{}c@{}c@{}c@{}c@{}c@{}c@{}c@{}c@{}c@{}c@{}c@{}c@{}} 0&&0&&1&&1&&1&&2&&2&&2&&3&&3&&3&&3&&3&&4&&4 \\ &1 && 1 && 1 &&2&&2&&2&& 3&&3&&3&&3&& 4 &&5&&5&&5 & \ea \right)
\end{equation}
which is consistent with the result computed by  Definition \ref{Dn}.

Now, we summary the  general principles of the decomposition (\ref{Gp}). The heights of the two rows of a  pairwise patten in Proposition \ref{Pb} are notated  as $j, j+1$ (Then $j$ is even) .
\begin{itemize}
\item If $j$th and $j+1$th rows  are even,   we will  get a partition  as $YB_2$ after the decomposition.  This partition can be  decomposed into two partitions further. One is a row with width of $n_{j}$ boxes. Since  $l-L(j+1)+\lambda_{L(j+1)}$ is even, by using the  formula (\ref{brb}),  its contribution  to symbol is
     \begin{equation}\label{bevenb}
       \Bigg(\!\!\!\ba{c}0\;\;0\cdots 0 \;\;  0 \cdots  0 \;\;0 \cdots  0 \\
\;\;\;0\cdots 0 \;\; \underbrace{1 \cdots  1}_{n_j/2}\;\underbrace{0 \cdots  0 }_{l(j+1)/2} \ea \Bigg)
     \end{equation}
   with $L(j)=\sum^{m}_{i=j} n_{i}$.
   And another part  of the decomposition   is a rectangle.  The height of the rectangle  is even,  consisting of  $j-2$ boxes.  And the width $W(j)$ depend on the parity of rows before    this pairwise patten
   $$W(j)=n_j+(n_{j-1}+b).$$
 If the row before  the pairwise patten is odd, $b=-1$ as shown in Fig.\ref{S1}, otherwise $b=0$ \ref{S2}. By using the formula (\ref{b22r}), the contribution to symbol of this rectangle is
     \begin{equation*}
       \Bigg(\!\!\!\ba{c}0\;\;0\cdots 0 \;\;  \overbrace{ {(j-2)}/{2} \cdots  {(j-2)}/{2}}^{W(j)} \;\;0 \cdots  0 \\
\;\;\;0\cdots 0 \;\; \underbrace{ {(j-2)}/{2} \cdots  {(j-2)}/{2}}_{W(j)}\;\underbrace{0 \cdots  0 }_{l(j+1)/2} \ea \Bigg)
     \end{equation*}
   \begin{figure}[!ht]
  \begin{center}
    \includegraphics[width=4in]{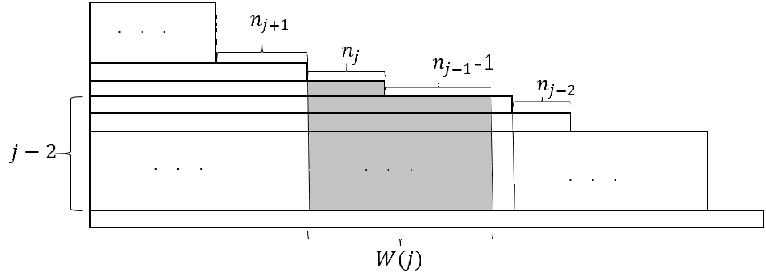}
  \end{center}
  \caption{The $(j+1)$th, $j$th rows  are even and in a pairwise  pattern. The  $(j-1)$th, $(j-2)$th   rows  are old and  in a pairwise  pattern. $n_{j}$  is even and   $n_{j-1}$ is old. Then $W(j)=n_j+(n_{j-1}-1)$.}
  \label{S1}
\end{figure}

    \begin{figure}[!ht]
  \begin{center}
    \includegraphics[width=4in]{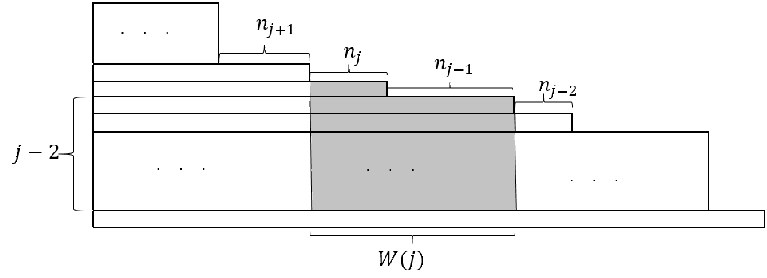}
  \end{center}
  \caption{The $(j+1)$th, $j$th rows  are even and in a pairwise  pattern. The  $(j-1)$th, $(j-2)$th   rows  are even and  in a pairwise  pattern. $n_{j}$,    $n_{j-1}$ are even. Then $W(j)=n_j+n_{j-1}$.}
  \label{S2}
\end{figure}

\item  If these two rows    is odd,   we will  get a partition  as $YB_3$ after the decomposition.  This partition can be  decomposed into two partitions further. One is the partition $21^{n_j}$ .  Since  $l-(L(j+1)+1)+\lambda_{L(j+1)+1}$ is old, by using the formula (\ref{bzt}),  its contribution  to symbol is
     \begin{equation}\label{boddt}
\Bigg(\!\!\!\ba{c}0\;\;0\cdots 0 \;\; \overbrace{ 1 \cdots  1}^{n_j/2+1} \;\;\overbrace{ 0 \cdots  0}^{(l(j+1)-1)/2} \\
\;\;\;0\cdots 0 \;\;{ 0 \cdots  0}\;{0 \cdots  0} \ \ea \Bigg)
     \end{equation}
   And the other  part  of the decomposition   is rectangle. The height of the rectangle is even,   consisting of  $j-2$ boxes.  And the width $W(j)$ depend on the parities  of rows before   this pairwise patten
   $$W(j)=(n_j+1)+(n_{j-1}+b),$$
 If the row before  the pairwise patten is odd,  $b=-1$ as shown in Fig.\ref{S4}, otherwise $b=0$ as shown in Fig.\ref{S3}. By using the formula (\ref{b22r}), the contribution to symbol of this rectangle is
     \begin{equation*}
       \Bigg(\!\!\!\ba{c}0\;\;0\cdots 0 \;\;  \overbrace{ {(j-2)}/{2} \cdots  {(j-2)}/{2}}^{W(j)} \;\;0 \cdots  0 \\
\;\;\;0\cdots 0 \;\; \underbrace{ {(j-2)}/{2} \cdots  {(j-2)}/{2}}_{W(j)}\;\underbrace{0 \cdots  0 }_{(l(j+1)-1)/2} \ea \Bigg).
     \end{equation*}
 \end{itemize}

 \begin{figure}[!ht]
  \begin{center}
    \includegraphics[width=4in]{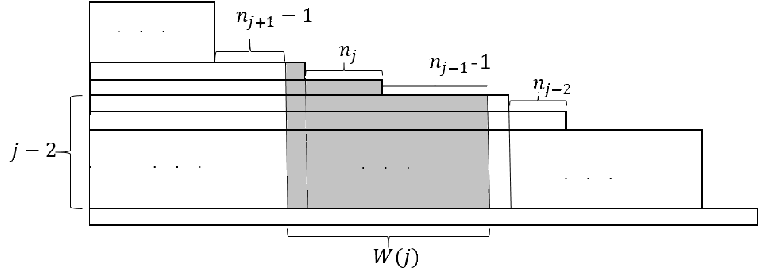}
  \end{center}
  \caption{The $j+1$, $j$th rows  are old and in a pairwise  pattern. The  $(j-1)$th, $(j-2)$th   rows  are old and  in a pairwise  pattern. $n_{j}$   and $n_{j-1}$ are even. Then $W(j)=(n_j+1)+(n_{j-1}-1)$.}
  \label{S4}
\end{figure}

 \begin{figure}[!ht]
  \begin{center}
    \includegraphics[width=4in]{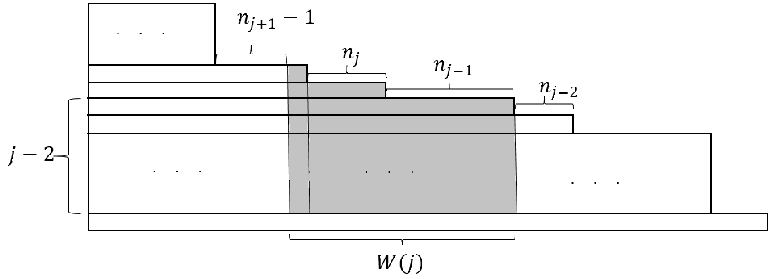}
  \end{center}
  \caption{The $(j+1)$th, $j$th rows  are old and in a pairwise  pattern. The  $(j-1)$th, $(j-2)$th   rows  are even and  in a pairwise  pattern. $n_{j}$  is even and   $n_{j-1}$ is old. Then $W(j)=(n_j+1)+n_{j-1}$.}
  \label{S3}
\end{figure}

Let $m$ be the number of rows of a partition.   $YB_1$ can be seen as a special case with $n_{m+1}=0$ and $n_{m+2}=0$. And  $YB_5$ can be seen as a special case with $j-2=0$.
 The parity of  rows in the pairwise patten is determined by $\Delta(j)=\frac{1-(-1)^{L(j)}}{2}$. If $\Delta(j)=0$, the rows in a pairwise patten are even, otherwise they are odd.  Taking account of the  contribution of the first row in the partition,  we can summary above results as the following closed formula of symbol.
\begin{Pro}\label{Ppb}
For a partition $\lambda=m^{n_m}{(m-1)}^{n_{m-1}}\cdots{1}^{n_1}$ in the $B_n$ theory, denoting  $L(j)=\sum^{m}_{i=j} n_{i}$ and $\Delta(j)=\frac{1-(-1)^{L(j)}}{2}$, we introduce the following notations
$$W(j)=\frac{1}{2}((n_{j}+\Delta(j))+(n_{j-1}-\Delta(j-1)))$$
and
$$Sp(j+1)=\frac{1}{2}(l(j+1)-\Delta(j+1)).$$
 Then the symbol of $\lambda$ is
\begin{eqnarray}\label{Pbi}
  && \sigma_B(\lambda)= \Bigg(\!\!\!\ba{c}\overbrace{0\;\;0\cdots \cdots 0}^{(l(1)+1)/2}  \\
 \;\;\;\underbrace{1\cdots \cdots 1}_{(l(1)-1)/2}\ \ea \Bigg)+\sum^{[(m+1)/2]}_{j=1}\Bigg\{\Bigg(\!\!\!\ba{c}0\;\;0\cdots 0 \;\; \overbrace{ {(j-2)}/{2} \cdots  {(j-2)}/{2}}^{W(j)} \;\;\overbrace{ 0 \cdots  0}^{Sp(j+1)} \\
\;\;\;0\cdots 0 \;\;\underbrace{ {(j-2)}/{2} \cdots  {(j-2)}/{2}}_{W(j)}\;\underbrace{0 \cdots  0}_{Sp(j+1)} \ \ea \Bigg) \nonumber \\
   &&\quad\,\,  + \Bigg(\!\!\!\ba{c}0\;\;0\cdots 0 \;\; \overbrace{ 1 \cdots  1}^{\Delta(j+1)\Delta W(j)} \;\;\overbrace{ 0 \cdots  0}^{Sp(j+1)} \\
\;\;\;0\cdots 0 \;\;\underbrace{ 0 \cdots  0}_{\Delta(j+1)\Delta W(j)}\;\underbrace{0 \cdots  0}_{Sp(j+1)} \ \ea \Bigg)+ \Bigg(\!\!\!\ba{c}0\;\;0\cdots 0 \;\; \overbrace{ 0 \cdots  0}^{(1-\Delta(j+1))\Delta W(j)} \;\;\overbrace{ 0 \cdots  0}^{Sp(j+1)} \\
\;\;\;0\cdots 0 \;\;\underbrace{ 1 \cdots  1}_{(1-\Delta(j+1))\Delta W(j)}\;\underbrace{0 \cdots  0}_{Sp(j+1)} \ \ea \Bigg)\Bigg\}
\end{eqnarray}
with $\Delta W(j)= \frac{1}{2}(n_{j}-\Delta(j))$.
\end{Pro}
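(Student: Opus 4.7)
The plan is to prove the closed formula (\ref{Pbi}) by generalizing the decomposition carried out in the worked example preceding the statement. First I would peel off the top row of the Young diagram, which by Proposition \ref{Pb} has odd length $l(1)$. Its contribution to the symbol can be computed directly from Definition \ref{Dn} and produces the initial summand of (\ref{Pbi}), with $(l(1)+1)/2$ zeros on top and $(l(1)-1)/2$ ones on the bottom.

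Next I would decompose the remaining tableau into pieces indexed by the pairwise patterns from Proposition \ref{Pb}, one piece for each value of $j \in \{1,\ldots,[(m+1)/2]\}$. For each such $j$ I isolate an ``L-shape'' containing rows $j$ and $j+1$ of the diagram, then further split this L-shape into a rectangle of height $j-2$ plus an ``end-cap'' at the upper right. The end-cap has shape $1^{n_j}$ when rows $j,j+1$ have even length and shape $21^{n_j}$ when they have odd length. The width of the rectangle is exactly $W(j)$: the $\pm 1$ parity corrections encoded in $\Delta(j)$ and $\Delta(j-1)$ are justified by the four cases displayed in Figures \ref{S1}--\ref{S4}, which enumerate whether the current pair and the pair immediately below are of even or odd length.

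To assemble the contributions I apply the building blocks of Section 3.1. The rectangle contributes via (\ref{b22r}) with entries $(j-2)/2$, shifted right by a ``spacer'' of $Sp(j+1)$ columns that accounts for the rows strictly below the pair; the count $Sp(j+1)=\frac{1}{2}(l(j+1)-\Delta(j+1))$ is precisely the number of $\alpha_*$ (equivalently $\beta_*$) produced by those lower rows, as read off from the structure theorem, Proposition \ref{L}. The end-cap contributes via (\ref{brb}) when the pair has even length (since in that case $l-L(j+1)+\lambda_{L(j+1)}$ is even) and via (\ref{bzt}) when the pair has odd length; the two correction terms in (\ref{Pbi}) with prefactors $\Delta(j+1)$ and $1-\Delta(j+1)$ are precisely these two alternatives.

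The main obstacle will be verifying that the nonzero contributions from different values of $j$ are placed in disjoint column ranges, so that they can be added without interference and together reproduce the full symbol (\ref{Dt}). This reduces to a bookkeeping identity among $Sp(j\pm 1)$, $W(j)$ and $\Delta W(j)$ that follows from $l(j-1)=l(j+1)+n_j+n_{j-1}$ together with the parity relations between $\Delta(j-1)$, $\Delta(j)$ and $\Delta(j+1)$. The boundary cases $j=1$ (vanishing rectangle, so only the end-cap contributes) and $j=[(m+1)/2]$ (no pair above the current one) are absorbed using the conventions $n_{m+1}=n_{m+2}=0$ as already observed in the discussion of the example.
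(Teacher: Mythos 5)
Your proposal follows essentially the same route as the paper: peel off the odd first row, split each pairwise pattern into a rectangle of height $j-2$ and width $W(j)$ plus an end-cap of shape $1^{n_j}$ (even case, formula (\ref{bevenb})) or $21^{n_j}$ (odd case, formula (\ref{boddt})), apply the building blocks (\ref{b22r}), (\ref{brb}), (\ref{bzt}) with the spacer $Sp(j+1)$, and absorb the boundary cases via $n_{m+1}=n_{m+2}=0$ and $j-2=0$. Your added remark on checking that the nonzero blocks occupy disjoint column ranges is a point the paper leaves implicit, but the overall argument is the same.
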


\begin{flushleft}
  \textbf{Remark:} The  following partition has an even number of  rows
  \be\label{Yb3}
\tableau{2 5 9 13 15 19}.\non
\ee
According to Proposition \ref{Pb}, the last row is even. The  proposition is also collect in the special case with $n_{m+1}=0$.
\end{flushleft}

\subsection{Symbol of  partitions in the  $C_n$ theory}
We explain our strategy for  computing  symbol by an example.
\begin{flushleft}
\textbf{Example:} $C_{58}$, $\lambda=8^47^26^35^44^43^42^41^2$, the Young tableaux is
\begin{figure}[!ht]
  \begin{center}
    \includegraphics[width=2.4in]{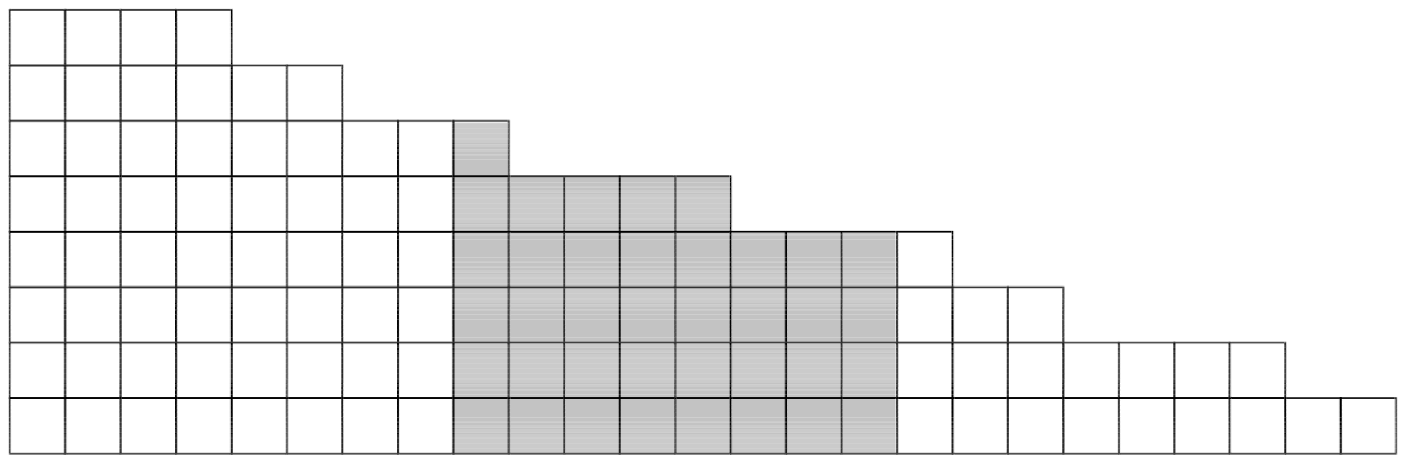}
  \end{center}
\end{figure}
\end{flushleft}

We can decompose it into parts whose contribution to symbol  can be computed by the formulas in  subsection 3.1. The gray part of the Young tableau has the following decomposition.
\begin{equation}\label{Yc}
  \tableau{1 5 8 8 8 8}= \tableau{8 8 8 8}+ \tableau{1 5}
\end{equation}

According to Proposition \ref{Pc}, the longest two rows in a rigid $C_n$ partition both contain either  an even or an odd number  of boxes.  This pairwise pattern then continues. So we need not  to discuss the contribution of the  first row independent as in the $B_n$ case.   For the two rows in a pairwise patten in Proposition \ref{Pc}, we denote the heights as $j, j+1$ (Then $j$ is odd). Note that we should append a extra 0 as the last part of the partition if the first row is odd which means $l\rightarrow l+1$.  We can derive the  formula of the contribution of each block taking the gray part as a example.    Denoting  $L(j)=\sum^{m}_{i=j} n_{i}$.
\begin{itemize}
\item  If the  two rows  in the pairwise patten   is odd,   we will  get a partition  as the Young tableaux in (\ref{Yc}) or as shown in Fig.\ref{Soo}.  This partition can be further decomposed into two partitions. One is the partition $21^{n_j}$($n_j$ is even). Since  $l-(L(j+1)+1)+\lambda_{L(j+1)+1}$ is old, by using the formula (\ref{bzt}),  its contribution  to symbol is
     \begin{equation}\label{coddt}
\Bigg(\!\!\!\ba{c}0\;\;0\cdots 0 \;\; \overbrace{ 1 \cdots  1}^{n_j/2+1} \;\;\overbrace{ 0 \cdots  0}^{(l(j+1)-1)/2} \\
\;\;\;0\cdots 0 \;\;{ 0 \cdots  0}\;{0 \cdots  0} \ \ea \Bigg)
     \end{equation}
   Another part  of the decomposition   is a rectangle. The height of the rectangle is  $j-1$ which is  even.  And the width $W(j)$ depend on the parity of rows before    this pairwise pattern,
   $$W(j)=(n_j+1)+(n_{j-1}+b).$$
 If the row before  the pairwise patten is odd, $b=-1$ as shown in Fig.\ref{Soo}, otherwise $b=0$ as shown in Fig.\ref{Soe}. By using the formula (\ref{b22r}), the contribution to symbol of this rectangle is
     \begin{equation*}
       \Bigg(\!\!\!\ba{c}0\;\;0\cdots 0 \;\;  \overbrace{ {(j-2)}/{2} \cdots  {(j-2)}/{2}}^{W(j)} \;\;0 \cdots  0 \\
\;\;\;0\cdots 0 \;\; \underbrace{ {(j-2)}/{2} \cdots  {(j-2)}/{2}}_{W(j)}\;\underbrace{0 \cdots  0 }_{(l(j+1)-1)/2} \ea \Bigg).
     \end{equation*}
         \begin{figure}[!ht]
  \begin{center}
    \includegraphics[width=4in]{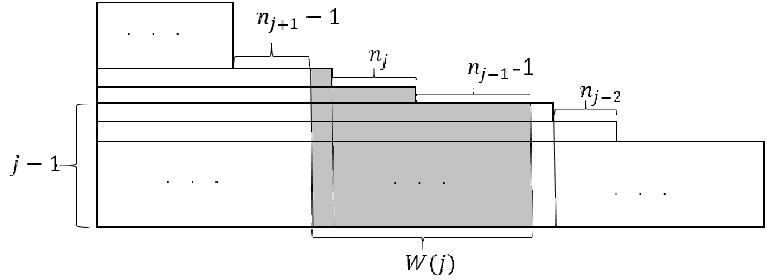}
  \end{center}
  \caption{The $j+1$, $j$th rows  are old and in a pairwise  pattern. The  $(j-1)$th, $(j-2)$th   rows  are old and  in a pairwise  pattern. $n_{j}$   and $n_{j-1}$ are even. Then $W(j)=(n_j+1)+(n_{j-1}-1)$.}
  \label{Soo}
\end{figure}
    \begin{figure}[!ht]
  \begin{center}
    \includegraphics[width=4in]{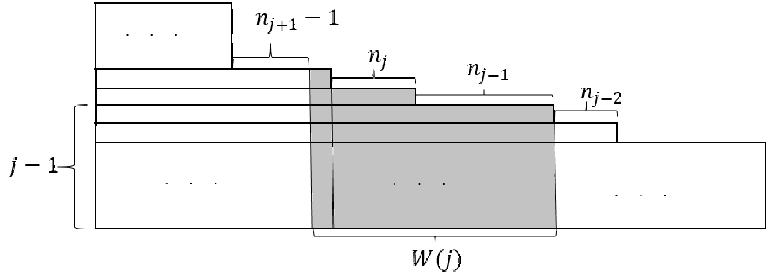}
  \end{center}
  \caption{The $(j+1)$th, $j$th rows  are old and in a pairwise  pattern. The  $(j-1)$th, $(j-2)$th   rows  are even and  in a pairwise  pattern. $n_{j}$  is even and   $n_{j-1}$ is old. Then $W(j)=(n_j+1)+n_{j-1}$.}
  \label{Soe}
\end{figure}

\item If the two rows  in the pairwise patten   is even,   we will  get a partition  which  can be further decomposed into two partitions as in the $B_n$ theory. One is a single row with width of $n_{j}$ boxes. Since   $l-L(j+1)+\lambda_{L(j+1)}$ is even, by using the formula (\ref{brb}),  its contribution  to symbol is
     \begin{equation}\label{cevenb}
       \Bigg(\!\!\!\ba{c}0\;\;0\cdots 0 \;\;  0 \cdots  0 \;\;0 \cdots  0 \\
\;\;\;0\cdots 0 \;\; \underbrace{1 \cdots  1}_{n_j/2}\;\underbrace{0 \cdots  0 }_{l(j+1)/2} \ea \Bigg).
     \end{equation}
   Another one   is a rectangle. The height of the rectangle is  $j-1$ which is even.  And the width $W(j)$ depend on the parity of rows before  and after  this pairwise patten
   $$W(j)=n_j+(n_{j-1}+b).$$
  If the row before  the pairwise patten is odd, Fig.\ref{Seo}, otherwise $b=0$ as shown in Fig.\ref{See}. By using the formula (\ref{b22r}), the contribution to symbol of this rectangle is
     \begin{equation*}
       \Bigg(\!\!\!\ba{c}0\;\;0\cdots 0 \;\;  \overbrace{ {(j-2)}/{2} \cdots  {(j-2)}/{2}}^{W(j)} \;\;0 \cdots  0 \\
\;\;\;0\cdots 0 \;\; \underbrace{ {(j-2)}/{2} \cdots  {(j-2)}/{2}}_{W(j)}\;\underbrace{0 \cdots  0 }_{l(j+1)/2} \ea \Bigg)
     \end{equation*}

 \end{itemize}

 \begin{figure}[!ht]
  \begin{center}
    \includegraphics[width=4in]{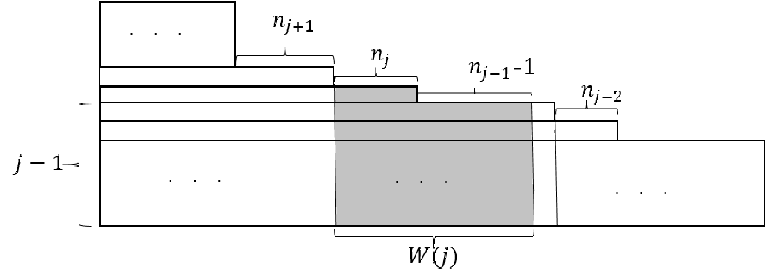}
  \end{center}
  \caption{The $(j+1)$th, $j$th rows  are even and in a pairwise  pattern. The  $(j-1)$th, $(j-2)$th   rows  are old and  in a pairwise  pattern. $n_{j}$  is even and   $n_{j-1}$ is old. Then $W(j)=n_j+(n_{j-1}-1)$.}
  \label{Seo}
\end{figure}
    \begin{figure}[!ht]
  \begin{center}
    \includegraphics[width=4in]{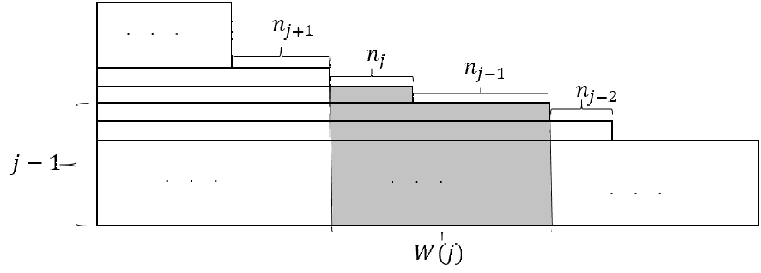}
  \end{center}
  \caption{The $(j+1)$th, $j$th rows  are even and in a pairwise  pattern. The  $(j-1)$th, $(j-2)$th   rows  are even and  in a pairwise  pattern. $n_{j}$,    $n_{j-1}$ are even. Then $W(j)=n_j+n_{j-1}$.}
  \label{See}
\end{figure}

 The parity of the pairwise patten is determined by $\Delta(j)=\frac{1-(-1)^{L(j)}}{2}$. If $\Delta(j)=0$, the rows in the pairwise patten are even otherwise they are odd.  We can summary the above results as the following closed formula of symbol.
\begin{Pro}\label{Ppc}
For a partition $\lambda=m^{n_m}{(m-1)}^{n_{m-1}}\cdots{1}^{n_1}$ in the $C_n$ theory, notating  $L(j)=\sum^{m}_{i=j} n_{i}$ and $\Delta(j)=\frac{1-(-1)^{L(j)}}{2}$, we introduce the following notations
$$W(j)=\frac{1}{2}((n_{j}+\Delta(j))+(n_{j-1}-\Delta(j-1)))$$
and
$$Sp(j+1)=\frac{1}{2}(l(j+1)-\Delta(j+1)).$$
 Then the symbol of the partition is
\begin{eqnarray}\label{Pci}
  && \sigma_C(\lambda)= \sum^{[(m+1)/2]}_{j=1}\Bigg\{\Bigg(\!\!\!\ba{c}0\;\;0\cdots 0 \;\; \overbrace{ {(j-1)}/{2} \cdots  {(j-1)}/{2}}^{W(j)} \;\;\overbrace{ 0 \cdots  0}^{Sp(j+1)} \\
\;\;\;0\cdots 0 \;\;\underbrace{ {(j-1)}/{2} \cdots  {(j-1)}/{2}}_{W(j)}\;\underbrace{0 \cdots  0}_{Sp(j+1)} \ \ea \Bigg) \nonumber \\
   &&\,\, + \Bigg(\!\!\!\ba{c}0\;\;0\cdots 0 \;\; \overbrace{ 1 \cdots  1}^{\Delta(j+1)\Delta W(j)} \;\;\overbrace{ 0 \cdots  0}^{Sp(j+1)} \\
\;\;\;0\cdots 0 \;\;\underbrace{ 0 \cdots  0}_{\Delta(j+1)\Delta W(j)}\;\underbrace{0 \cdots  0}_{Sp(j+1)} \ \ea \Bigg)+ \Bigg(\!\!\!\ba{c}0\;\;0\cdots 0 \;\; \overbrace{ 0 \cdots  0}^{(1-\Delta(j+1))\Delta W(j)} \;\;\overbrace{ 0 \cdots  0}^{Sp(j+1)} \\
\;\;\;0\cdots 0 \;\;\underbrace{ 1 \cdots  1}_{(1-\Delta(j+1))\Delta W(j)}\;\underbrace{0 \cdots  0}_{Sp(j+1)} \ \ea \Bigg)\Bigg\}
\end{eqnarray}
with $\Delta W(j)= \frac{1}{2}(n_{j}-\Delta(j))$.
\end{Pro}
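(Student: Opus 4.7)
The plan is to mimic the $B_n$ argument (Proposition \ref{Ppb}) almost verbatim, adapting only the bookkeeping needed to reflect Proposition \ref{Pc}: in the $C_n$ case the pairwise patterns begin at the top of the Young diagram (rather than after a distinguished first row as in $B_n$), and an extra $0$ is appended only when the partition has odd length. With this adjustment, the closed formula is essentially a "sum of contributions of the pairwise blocks."

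First I would set up the decomposition. By Proposition \ref{Pc}, the rows of $\la$ naturally group into pairwise patterns at heights $(j,j+1)$ with $j$ odd, each pair being either both-even or both-odd; if the number of rows is odd the last row is even. I then cut the Young diagram horizontally at each pair, and within each pair I carve off a rectangle of height $j-1$ (which is even) together with a "cap" sitting on top of rows $j$ and $j+1$. The cap is $1^{n_j}$ when the pair is even and $21^{n_j}$ when the pair is odd; the rectangle has width $W(j)=\tfrac{1}{2}((n_j+\Delta(j))+(n_{j-1}-\Delta(j-1)))$, where the correction $\pm\tfrac{1}{2}\Delta(\cdot)$ records whether one or two boxes are absorbed into the adjacent cap, exactly as in Figures \ref{Soo}–\ref{See}.

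Next I would compute each block's contribution using the building blocks of Section 3.1. The rectangle of height $j-1$ and width $W(j)$ contributes by the extended Rule B formula (\ref{b22r}) a common value $(j-1)/2$ in both rows across $W(j)$ positions, starting immediately to the left of a buffer of $Sp(j+1)=\tfrac{1}{2}(l(j+1)-\Delta(j+1))$ zeros (those zeros record the positions of all the blocks already placed further to the right, corresponding to rows below height $j+1$). For the cap, I would use the parity of $l-L(j+1)+\la_{L(j+1)}$ (which is even in the even-pair case and odd in the odd-pair case): in the even-pair case the row cap $1^{n_j}$ contributes via (\ref{brb}) a string of $1$'s of length $\Delta W(j)=\tfrac{1}{2}(n_j-\Delta(j))=n_j/2$ in the bottom row; in the odd-pair case the cap $21^{n_j}$ contributes via (\ref{bzt}) a string of $1$'s of length $\Delta W(j)+1=(n_j+1)/2$ in the top row. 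The indicator functions $\Delta(j+1)$ and $1-\Delta(j+1)$ in (\ref{Pci}) pick out the right one of these two alternatives. Summing over $j=1,\dots,[(m+1)/2]$ gives the stated formula.

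The technical step I expect to be the main obstacle is verifying that the alignment of the three contributions within each $j$-block is consistent, i.e.\ that the rectangle sits exactly in positions $\{Sp(j+1)+1,\dots,Sp(j+1)+W(j)\}$ and that the cap is adjacent to it with no overlap or gap. This is what the structure theorem (Proposition \ref{L}) is for: it guarantees that the nonzero entries produced by the parts $m^{n_m}\cdots k^{n_k}$ occupy exactly the last $\lceil L(k)/2\rceil$ positions (in each row) of the symbol, and the parities of the row-lengths control whether an $\alpha$- or $\beta$-slot is the one that receives a $0$. Once this alignment is verified, one has to check that Proposition \ref{Pc} guarantees $\Delta(j-1)$ and $\Delta(j)$ take the values that make $W(j)$ an integer (i.e.\ that an odd pair must be followed below by a pair of the same length modulo $2$ adjustments, so that $\Delta$'s agree), which is direct from the parity-preservation property in Proposition \ref{Pc}. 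The extra $0$ appended when $l(1)$ is odd is absorbed into the $j=1$ block: it shifts $l\to l+1$ but makes $\Delta(1)=0$ so the boundary formulas remain uniform. At this point, a term-by-term comparison with Definition \ref{Dn} (as illustrated by the $C_{58}$ example) finishes the proof.
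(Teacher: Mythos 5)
Your proposal is correct and takes essentially the same route as the paper: the paper likewise treats each $C_n$ pairwise pattern at heights $(j,j+1)$ with $j$ odd as a height-$(j-1)$ rectangle of width $W(j)$ (handled by the extended Rule~A formula (\ref{b22r})) plus a cap $1^{n_j}$ or $21^{n_j}$ (handled by (\ref{brb}) or (\ref{bzt}) according to the parity recorded by $\Delta(j+1)$), with no separate first-row term and the appended $0$ absorbed when $l$ is odd. The only cosmetic difference is that you invoke Proposition~\ref{L} explicitly to justify the alignment of the blocks, which the paper leaves implicit in its $C_n$ subsection.
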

\begin{flushleft}
  \textbf{Remarks:}
\begin{itemize}
  \item Compared  with  the above formula, the first term in  the formula (\ref{Pbi}) disappear. The reason is that   the first row is in a pairwise patten in the $C_n$ theroy.
  \item  Note that formula (\ref{coddt}) is exact match with the formula (\ref{boddt}) and the formula (\ref{cevenb}) is exact match with the formula (\ref{bevenb}), which will be  discussed in the last section.
\end{itemize}
\end{flushleft}

\subsection{Symbol of  partitions in the $D_n$ theory}
According to  Proposition \ref{Pd}, the first row of a partition in the $D_n$ theory is even. For the first step to compute the symbol,  an extra 0  is appended as the last part of the partition.  Then the  length of the new partition is odd. The Young tableaux of the new partition  can be seen as a  Young tableaux  in the $B_n$ theory. According to  Remark 2 of Definition \ref{Dn},  the remaining  steps to compute symbol are all consistent with those in  the $B_n$ theory.  So we can  use the results  in the $B_n$ theory directly  except the first row.  The  first row  appended an extra  0   is different from that of a partition in the $B_n$ theory. However, we can calculate the contribution to symbol of the first row directly
  $$\Bigg(\!\!\!\!\ba{c}\overbrace{\;1 \;\; 1\cdots 1\;\; 1 \cdots 1\;}^{l/2} \\
\;\;0\;\;0\cdots 0\; \;0\cdots 0 \ \ea \Bigg)$$
where $l$ is the length of the  partition. From above discussions, the contributions of other parts of the partition are the same with those of the  $B_n$ theory. So we have
\begin{Pro}\label{Ppd}
For a partition $\lambda=m^{n_m}{(m-1)}^{n_{m-1}}\cdots{1}^{n_1}$ in the $D_n$ theory, denoting  $L(j)=\sum^{m}_{i=j} n_{i}$ and $\Delta(j)=\frac{1-(-1)^{L(j)}}{2}$, we introduce the  following notations
$$W(j)=\frac{1}{2}((n_{j}+\Delta(j))+(n_{j-1}-\Delta(j-1)))$$
and
$$Sp(j+1)=\frac{1}{2}(L(j+1)-\Delta(j+1)).$$
 Then the symbol of $\lambda$ is
\begin{eqnarray}\label{Pdi}
  && \sigma_D(\lambda)= \Bigg(\!\!\!\!\ba{c}\overbrace{\;1 \;\; 1\cdots 1\;\; 1 \cdots 1\;}^{l/2} \\
\;\;0\;\;0\cdots 0\; \;0\cdots 0 \ \ea \Bigg)+\sum^{[(m+1)/2]}_{j=1}\Bigg\{\Bigg(\!\!\!\ba{c}0\;\;0\cdots 0 \;\; \overbrace{ {(j-2)}/{2} \cdots  {(j-2)}/{2}}^{W(j)} \;\;\overbrace{ 0 \cdots  0}^{Sp(j+1)} \\
\;\;\;0\cdots 0 \;\;\underbrace{ {(j-2)}/{2} \cdots  {(j-2)}/{2}}_{W(j)}\;\underbrace{0 \cdots  0}_{Sp(j+1)} \ \ea \Bigg) \nonumber \\
   &&\quad\,\,  + \Bigg(\!\!\!\ba{c}0\;\;0\cdots 0 \;\; \overbrace{ 1 \cdots  1}^{\Delta(j+1)\Delta W(j)} \;\;\overbrace{ 0 \cdots  0}^{Sp(j+1)} \\
\;\;\;0\cdots 0 \;\;\underbrace{ 0 \cdots  0}_{\Delta(j+1)\Delta W(j)}\;\underbrace{0 \cdots  0}_{Sp(j+1)} \ \ea \Bigg)+ \Bigg(\!\!\!\ba{c}0\;\;0\cdots 0 \;\; \overbrace{ 0 \cdots  0}^{(1-\Delta(j+1))\Delta W(j)} \;\;\overbrace{ 0 \cdots  0}^{Sp(j+1)} \\
\;\;\;0\cdots 0 \;\;\underbrace{ 1 \cdots  1}_{(1-\Delta(j+1))\Delta W(j)}\;\underbrace{0 \cdots  0}_{Sp(j+1)} \ \ea \Bigg)\Bigg\}
\end{eqnarray}
with $\Delta W(j)= \frac{1}{2}(n_{j}-\Delta(j))$.
\end{Pro}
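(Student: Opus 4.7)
The plan is to mirror the proof of Proposition \ref{Ppb} (the $B_n$ case), exploiting that by Definition \ref{Dn}, the $D_n$ symbol is computed by appending an extra $0$ as the last part and then running the $B_n$ procedure. Thus the decomposition machinery from the $B_n$ proof applies essentially verbatim to all rows except the first, and the entire task reduces to (i) identifying the common indexed sum and (ii) correctly computing the first row contribution.

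First I would strip off the first row of $\lambda$, which has even length $\lambda_1$ by Proposition \ref{Pd}. The remaining rows, together with the appended $0$, fall into pairwise patterns exactly as in the $B_n$ case. For each pair $(j,j+1)$ I reuse the $B_n$ decomposition: the block formed by all rows of height at most $j$ splits into a rectangle of height $j-2$ and width $W(j)$ (whose contribution is given by (\ref{b22r})) plus a residue of shape $1^{n_j}$ or $21^{n_j}$ depending on the parity of the pair (contribution (\ref{brb}) or (\ref{bzt})). Summing over $j=1,\ldots,[(m+1)/2]$ reproduces verbatim the large indexed sum in (\ref{Pdi}), with the indicators $\Delta(j+1)$ and $1-\Delta(j+1)$ selecting the correct residue contribution.

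The genuinely new ingredient is the first row contribution, which differs from the $B_n$ case because $\lambda_1$ is now even. Since the extra $0$ has been appended, making the length $l$ odd, the term $l-1+\lambda_1$ is odd and places the first row entirely in the top row of the symbol. A direct application of the structure theorem (Proposition \ref{L}) to the first row together with the next rows, combined with the observation that the appended $0$ contributes $\beta_1=0$ (as was already used in the proof of Proposition \ref{ss} for $D_n$), yields the required prefactor
\begin{equation*}
\Bigg(\!\!\!\ba{c}\overbrace{\;1 \;\; 1\cdots 1\;\; 1 \cdots 1\;}^{l/2} \\
\;\;0\;\;0\cdots 0\; \;0\cdots 0 \ \ea \Bigg).
\end{equation*}

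The main obstacle will be the index bookkeeping: one must verify that the widths $W(j)$, spacings $Sp(j+1)$, and residue lengths $\Delta W(j)$ remain correct despite the appended $0$ (which shifts the effective length $l$ by one relative to $B_n$), and that the last pairwise pattern, which may involve the appended $0$ as one of its rows when $m$ is even, does not generate extra terms. Proposition \ref{L} makes each contribution depend only on local parities of $l-i+\lambda_i$, which are insensitive to a uniform shift of $l$ by one, so the indexed sum reduces identically to the one in (\ref{Pbi}); the $D_n$ formula (\ref{Pdi}) therefore differs from the $B_n$ formula only in the prefactor above, which precisely encodes the even-length first row together with the appended $0$.
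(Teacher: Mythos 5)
Your proposal follows essentially the same route as the paper: append the extra $0$, observe that every row except the first then contributes exactly as in the $B_n$ case so the indexed sum of Proposition \ref{Ppb} carries over verbatim, and compute the first-row contribution directly as $l/2$ ones in the top row. The paper's own argument in Section 3.4 is precisely this reduction, so apart from your slightly loose phrasing about the first row being ``placed entirely in the top row'' (each box alternates between $\alpha$- and $\beta$-positions; it is the resulting increments that all land on the $\alpha$'s), there is nothing to add.
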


\section{Discussions}
In this section,  a comparison  of between the closed formula of symbol in  this paper   and the  one  proposed  in \cite{Shou 2:2016} is made.   The   closed formula  found in \cite{Shou 2:2016} is
\begin{Pro}[\cite{Shou 2:2016}]\label{Fbcd}
For a partition $\lambda=m^{n_m}{(m-1)}^{n_{m-1}}\cdots{1}^{n_1}$, we introduce two notations
$$\Delta^{T}_i=\frac{1}{2}(\sum^{m}_{k=i}n_k+\frac{1+(-1)^{i+1}}{2}),\quad\quad  P^{T}_i=\frac{1+\pi_i}{2}$$
where the superscript $T$ indicates it is related to the top row of the symbol and
$$\pi_i=(-1)^{\sum^{m}_{k=i}n_k}\cdot(-1)^{i+1+t},$$
for $B_n(t=-1)$, $C_n(t=0)$, and $D_n(t=1)$ theories.
Other parallel notations
$$\Delta^{B}_i=\frac{1}{2}(\sum^{m}_{k=i}n_k+\frac{1+(-1)^{i}}{2}),\quad \quad  P^{B}_i=\frac{1-\pi_i}{2}$$
where the superscript $B$ indicates it is  related to the bottom row of the symbol and $P^{B}_i$ is a projection operator similar to  $P^{T}_i$.
Then the symbol $\sigma(\lambda)$   is
\begin{equation}\label{FbFb}
\sigma(\lambda)=\sum^m_{i=1}\big\{ \Bigg(\!\!\!\ba{c}0\;\;0\cdots 0\;\; \overbrace{1\cdots 1}^{P^{T}_i \Delta^{T}_i}  \\
\;\;\;\underbrace{0\cdots 0 \;\;0 \cdots 0}_{l+t}\ \ea \Bigg)
+
 \Bigg(\!\!\!\ba{c}\overbrace{0\;\;0\cdots 0\;\; 0\cdots 0}^{l}  \\
\;\;\;0\cdots 0 \;\;\underbrace{1\cdots 1}_{P^{B}_i \Delta^{B}_i}\ \ea \Bigg)\big\}
\end{equation}
with  $l=(m+(1-(-1)^m)/2)/2$.
\end{Pro}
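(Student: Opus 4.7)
The plan is to derive Proposition~\ref{Fbcd} from the closed formulas of Propositions~\ref{Ppb}, \ref{Ppc}, and \ref{Ppd} by reorganizing the summation index. The new formulas are indexed by pairwise blocks $j$ and produce rectangles of constant value $(j-2)/2$ or $(j-1)/2$ and width $W(j)$, together with singleton strips of width $\Delta W(j)$; the target formula~\eqref{FbFb} is indexed by single rows $i$ and produces unit strips of width $\Delta^T_i$ or $\Delta^B_i$. Since a rectangle of constant value $h$ is tautologically a stack of $h$ unit strips, the two descriptions should be related by a change of summation index.

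First I would decompose each block-$j$ rectangle into $(j-2)/2$ unit strips and attribute each such unit strip to one of the row indices $i<j$. Swapping the order of summation rewrites $\sum_j (\text{block contribution})$ as $\sum_i (\text{strip at row } i)$, where the length of the strip indexed by $i$ is a partial sum of the $W(j)$'s together with the relevant boundary pieces. I would then verify by telescoping that this partial sum equals $\Delta^T_i$ or $\Delta^B_i$, and that the parity projectors $P^T_i, P^B_i$ select whether the strip lands in the top or the bottom row of the symbol, in exact agreement with the case analysis (odd vs.\ even pairwise pattern, preceded by odd vs.\ even pattern) used in Propositions~\ref{Ppb}--\ref{Ppd}.

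The main obstacle is reconciling the two different styles of parity bookkeeping. The new formulas use $\Delta(j)=(1-(-1)^{L(j)})/2$ blockwise, while the target formula uses $\pi_i=(-1)^{L(i)}\cdot(-1)^{i+1+t}$ rowwise; the extra factor $(-1)^{i+1+t}$ encodes the theory-dependent offset $t\in\{-1,0,1\}$, which in turn accounts for the lone leading-row strip in $B_n$ (the first term outside the sum in Proposition~\ref{Ppb}), the absence of any leading strip in $C_n$, and the appended-zero leading strip in $D_n$ (the first term outside the sum in Proposition~\ref{Ppd}). I expect the linchpin identity to be that $\pi_i$ flips sign precisely when moving across a pairwise block boundary in the new decomposition, so that $P^T_i$ and $P^B_i$ alternately select top versus bottom in step with $\Delta(j+1)$ selecting the placement of the $\Delta W(j)$-strips. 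Once this sign matching is established, comparing widths reduces to routine arithmetic and the telescoping of the $W(j)$.
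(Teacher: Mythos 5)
Your strategy---deriving the row-indexed formula (\ref{FbFb}) from the block-indexed formulas of Propositions \ref{Ppb}--\ref{Ppd} by unstacking each constant rectangle of height $(j-2)/2$ into unit strips and interchanging the order of summation---is sound and would succeed, but it is not the route the paper takes. The paper establishes Proposition \ref{Fbcd} only through its equivalence with the new closed formula, and that equivalence proof is purely local rather than a global re-indexing: using the additivity of both computations over sub-partitions, it isolates each pairwise pattern as a standalone two-row partition, $2^{2b}1^{2c}$ in the even case and $2^{2b+1}1^{2c}$ in the odd case, computes its old-formula contribution as the sum of the two rows' table entries (formulas (\ref{ceven}), (\ref{codd})), recomputes the same piece from Rules A and B via (\ref{b22}), (\ref{bevenb}), (\ref{boddt}), and checks that the two arrays coincide; the first row and the unpaired final even row are treated as degenerate cases. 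This sidesteps your telescoping of the $W(j)$ and $Sp(j+1)$ altogether. What your route buys is a uniform treatment of $B_n$, $C_n$, $D_n$ through the offset $t$ (the paper writes out only $B_n$ and defers the other two theories to a remark) and a direct formula-to-formula derivation; what it costs is that the two items you defer---the ``linchpin'' claim that $\pi_i$ flips sign exactly at block boundaries so that $P^T_i$, $P^B_i$ track $\Delta(j+1)$, and the width identity showing that each unstacked level forms a right-anchored strip of length exactly $\Delta^T_i$ or $\Delta^B_i$ (essentially $W(j)+Sp(j+1)=Sp(j-1)$ up to parity corrections $\Delta(j\pm1)$)---constitute the entire substance of the argument and remain unexecuted in your sketch. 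Neither point is a fatal gap, but the paper's block-local comparison is the shorter path to the same conclusion.
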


In fact, this formula is equivalent to  the computational rules of symbol in the following  table \cite{Shou 2:2016}. We calculate the symbol of a rigid partition by summing the contribution of each row according to this table.
\begin{center}
\begin{tabular}{|c|c|c|c|}\hline
\multicolumn{4}{|c|}{ Contribution to  symbol of the $i$\,th row  }\\ \hline
Parity of row & Parity of $i+t+1$ & Contribution   & $L$  \\ \hline
odd & even  & $\Bigg(\!\!\!\ba{c}0 \;\; 0\cdots \overbrace{ 1\;\; 1\cdots1}^{L} \\
\;\;\;0\cdots 0\;\; 0\cdots 0 \ \ea \Bigg)$ & $\frac{1}{2}(\sum^{m}_{k=i}n_k+1)$  \\ \hline
even & odd  & $\Bigg(\!\!\!\ba{c}0 \;\; 0\cdots \overbrace{ 1\;\; 1\cdots1}^{L} \\
\;\;\;0\cdots 0\;\; 0\cdots 0 \ \ea \Bigg)$   & $\frac{1}{2}(\sum^{m}_{k=i}n_k)$  \\ \hline
even & even  &  $\Bigg(\!\!\!\ba{c}0 \;\; 0\cdots 0\;\; 0 \cdots 0 \\
\;\;\;0\cdots \underbrace{1 \;\;1\cdots 1}_{L} \ \ea \Bigg)$   & $\frac{1}{2}(\sum^{m}_{k=i}n_k)$  \\ \hline
odd & odd  &  $\Bigg(\!\!\!\ba{c}0 \;\; 0\cdots 0\;\; 0 \cdots 0 \\
\;\;\;0\cdots \underbrace{1\; \;1\cdots 1}_{L} \ \ea \Bigg)$     & $\frac{1}{2}(\sum^{m}_{k=i}n_k-1)$  \\ \hline
\end{tabular}
\end{center}

We prove the  equivalence of the old and  new closed formulas of symbol.
\begin{Pro}\label{Dual}
For a partition in the $B_n$ theory, the symbol given by the formula (\ref{Pbi}) is equal to the symbol given by the  formula (\ref{FbFb}).
\end{Pro}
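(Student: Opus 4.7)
The plan is to prove Proposition \ref{Dual} by showing that both closed formulas assign the same integer to every position of the symbol. Since the old formula (\ref{FbFb}) is a sum of per-row contributions (one $1$ per row of the Young diagram, placed either in the top or the bottom of the symbol according to the table following (\ref{FbFb})), while the new formula (\ref{Pbi}) is a sum of block contributions indexed by $j$, the natural strategy is to regroup the per-row sum into the per-block sum and verify the matching.

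First I would list, for each row $k=1,\dots,L(1)$ of the Young diagram, its contribution to (\ref{FbFb}): a single $1$ at position $P^{T}_k\Delta^{T}_k$ from the right in the top of the symbol, or at $P^{B}_k\Delta^{B}_k$ in the bottom, where $t=-1$ in the $B_n$ case. Using Proposition \ref{Pb} I would then partition the rows as: the first row alone, then pairwise patterns. The first row's contribution is a single $1$ at position $(L(1)-1)/2$ from the right in the bottom, which is exactly the first summand of (\ref{Pbi}). For each pairwise pattern the two rows share the same parity, so their per-row contributions in (\ref{FbFb}) land in the \emph{same} row of the symbol at two adjacent positions. Summing these contributions across the consecutive pairwise patterns that share a given parity builds up a rectangle of height $(j-2)/2$ (matching the current depth of the staircase in the Young diagram) and width $W(j)$, padded on the right by $Sp(j+1)$ zeros. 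This reproduces the first block inside the $j$-th summand of (\ref{Pbi}).

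Whenever two consecutive pairwise patterns at levels $j+1$ and $j$ have opposite parities -- which is exactly what the pair of indicators $\Delta(j),\Delta(j+1)$ detects -- the top and bottom rectangles being built up from (\ref{FbFb}) differ in width by $\Delta W(j)=\tfrac{1}{2}(n_j-\Delta(j))$. This asymmetry produces $\Delta W(j)$ uncancelled $1$'s sitting on top of the previous rectangle, either in the top or in the bottom row of the symbol depending on whether the new pairwise pattern consists of odd or even rows. This is precisely the content of the remaining two blocks of the $j$-th summand of (\ref{Pbi}), and the prefactors $\Delta(j+1)$ and $1-\Delta(j+1)$ multiply out to select the correct one of the two.

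The main obstacle will be the bookkeeping of positions: I must translate $P^{T}_k\Delta^{T}_k$ and $P^{B}_k\Delta^{B}_k$, counted from the right via $L(k)$, into the block coordinates of (\ref{Pbi}), which use $W(j)$ and $Sp(j+1)$. The compatibility identity that closes the argument is essentially $W(j)+Sp(j+1)=Sp(j)+\Delta W(j)$ (with an appropriate reading of the parity flags), and verifying it in the four possible parity sub-cases at levels $j$ and $j-1$ -- guided by Figures \ref{S1}--\ref{S4} -- is where the case analysis concentrates. Once this identity is established, the equality of the two closed formulas follows entry-by-entry.
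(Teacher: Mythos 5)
Your high-level strategy --- split the partition into the first row, the pairwise patterns, and the residual last row, then match the two closed formulas block by block --- is the same as the paper's. The execution, however, rests on a misreading of the old formula (\ref{FbFb}) that makes several of your intermediate claims false. A summand of (\ref{FbFb}) is not ``a single $1$ at position $P^{T}_i\Delta^{T}_i$'': the overbrace denotes a \emph{run} of $P^{T}_i\Delta^{T}_i$ consecutive $1$'s at the right end of the top row (resp.\ $P^{B}_i\Delta^{B}_i$ in the bottom row), and the sum runs over $i=1,\dots,m$, one term per part value, not over the $L(1)$ rows of the diagram (a one-$1$-per-row reading gives total mass $L(1)$, which already disagrees with the symbol of $1^{2n+1}$). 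Consequently your very first identification fails: the first summand of (\ref{Pbi}) is a run of $(L(1)-1)/2$ ones filling the bottom row, not a single $1$ sitting at that position. Likewise, the claim that the two members of a pairwise pattern ``land in the same row of the symbol'' is wrong: in the table following (\ref{FbFb}) the destination depends on the parity of $i+t+1$ as well as on the parity of the row, and the former flips between consecutive indices, so each pairwise pattern sends one run to the top row and one to the bottom row of the symbol --- this is precisely how (\ref{ceven}) and (\ref{codd}) acquire nonzero entries in both rows.

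The paper's proof sidesteps the global position bookkeeping you propose: it normalizes each pairwise pattern to $2^{2b}1^{2c}$ (even case) or $2^{2b+1}1^{2c}$ (odd case), computes its contribution once from the table and once from the building blocks (\ref{b22}), (\ref{bevenb}), (\ref{boddt}), and checks the two answers agree; no translation identity between the coordinate systems is needed. The identity you propose to close your argument, $W(j)+Sp(j+1)=Sp(j)+\Delta W(j)$, does not hold as written: using $L(j)=L(j+1)+n_j$ it reduces to $n_j=n_{j-1}+3\Delta(j)-\Delta(j-1)-\Delta(j+1)$, which fails for generic multiplicities. To repair the argument you would need to restate the per-term content of (\ref{FbFb}) correctly (runs of $1$'s indexed by part values, landing alternately in the top and bottom rows) and redo the regrouping on that basis --- at which point you would essentially be reproducing the paper's block-by-block comparison.
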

  \begin{proof} First,  we  decompose a rigid partition into three parts: the first row,  rows in a pairwise patten,  and the last even row  not in a pairwise patten. Then we  calculate   the contribution to symbol of each part  using  different closed formulas.
  \begin{itemize}
    \item   For the first row, according to the computational rules in the above table,  its contribution  to symbol is the same with the first term of the  closed formula (\ref{Pbi}).
    \item  \begin{enumerate}
    \item For the two even rows in a pairwise patten with partition $2^{2b}1^{2c}$, according to the above table,  its contribution to symbol is
\be\label{ceven}
 \Bigg(\!\!\!\ba{c}0\;\;0\cdots 0 \;\; 0\cdots 0\;\; \overbrace{ 1\cdots 1}^{b} \\
\;\;\;0\cdots 0 \;\; \underbrace{1\cdots 1\;1\cdots 1}_{b+c} \ \ea \Bigg).
\ee

Next we compute the contribution   of $2^{2b}1^{2c}$ using the new closed formula (\ref{Pbi}).  We can decompose $2^{2b}1^{2c}$ into two parts $2^{2b}$ and $1^{2c}$. For the parts $2^{2b}$, using the formula (\ref{b22})  $b$ times  its  contribute to symbol is
 \be\label{b22c}
 \Bigg(\!\!\!\ba{c}0\;\;0\cdots 0 \;\; 0\cdots 0\;\; \overbrace{ 1\cdots 1}^{b} \\
\;\;\;0\cdots 0 \;\; \underbrace{0\cdots 0\;1\cdots 1}_{b+c} \ \ea \Bigg).
\ee
    According to the formula (\ref{bevenb}),  the contribution of the parts $1^{2c}$ is
    \be\label{bodd}
 \Bigg(\!\!\!\ba{c}0\;\;0\cdots 0 \;\; 0\cdots 0\;\; \overbrace{ 0\cdots 0}^{b}\\
\;\;\;0\cdots 0 \;\; \underbrace{1\cdots 1\;0\cdots 0}_{b+c} \ \ea \Bigg).
\ee
Combining the formulas (\ref{b22c}) and (\ref{bodd}), we get
\be
 \Bigg(\!\!\!\ba{c}0\;\;0\cdots 0 \;\; 0\cdots 0\;\; \overbrace{ 1\cdots 1}^{b} \non \\
\;\;\;0\cdots 0 \;\; \underbrace{1\cdots 1\;1\cdots 1}_{b+c} \ \ea \Bigg)\non
\ee
which is consistent with the formula (\ref{ceven}).
    \item For the two odd rows in a pairwise patten with partition $2^{2b+1}1^{2c}$, according to the above table,  its contribution to symbol is
\be\label{codd}
 \Bigg(\!\!\!\ba{c}0\;\;0\cdots 0 \;\; \overbrace{1\cdots 1\;\; 1\cdots1}^{b+c+1} \\
\;\;\;0\cdots 0 \;\;0\cdots0\;\underbrace{1\cdots 1}_{b+1} \ \ea \Bigg).
\ee

 Next we compute the contribution  of the parts $2^{2b+1}1^{2c}$ using the new closed formula  (\ref{Pbi}). We can decompose $2^{2b+1}1^{2c}$ into two parts $2^{2b+1}$ and $1^{2c}$. For  the parts $2^{2b+1}$,  using  the formula (\ref{b22}) $b+1$ times   its   contribution  to symbol is
 \be\label{b22o}
 \Bigg(\!\!\!\ba{c}0\;\;0\cdots 0 \;\;\overbrace{ 0\cdots 0\;\;  1\cdots 1}^{b+c+1} \non \\
\;\;\;0\cdots 0 \;\; 0\cdots 0\;\underbrace{1\cdots 1}_{b+1} \ \ea \Bigg).\non
\ee
    According to the formula (\ref{boddt}), the contribution of the parts $1^{2c}$  is
    \be\label{bevenbb}
 \Bigg(\!\!\!\ba{c}0\;\;0\cdots 0 \;\;\overbrace{ 1\cdots 1\;\;  0\cdots 0}^{b+c+1} \non \\
\;\;\;0\cdots 0 \;\; 0\cdots 0\;\underbrace{0\cdots 0}_{b+1} \ \ea \Bigg).\non
\ee
Combining the formulas (\ref{b22o}) and (\ref{bevenbb}), we get
\be
 \Bigg(\!\!\!\ba{c}0\;\;0\cdots 0 \;\; \overbrace{1\cdots 1\;\; 1\cdots1}^{b+c+1}\non \\
\;\;\;0\cdots 0 \;\;0\cdots0\;\underbrace{1\cdots 1}_{b+1} \ \ea \Bigg).\non
\ee
which is consistent with the  formula (\ref{codd}).

    \end{enumerate}

\item The lase even row  which is not in a pairwise patten can be seen as a special case of  a pairwise patten.  According to the above discussions,  we get the same result by different closed formulas.
  \end{itemize}
\end{proof}
\begin{flushleft}
  \textbf{Remark:} We can  prove the theorem  for the  $C_n$ and  $D_n$ theories similarly.
\end{flushleft}

Although the old  closed  formula of symbol (\ref{FbFb}) is  concise, the new one (\ref{Pbi}) is more essential, which is derived  from two basic building blocks \textbf{'Rule A'} and \textbf{'Rule B'}. Using  the new closed formula,  we can prove the  following proposition proposed in \cite{Shou 2:2016} neatly.
\begin{Pro}[\cite{Shou 2:2016}]\label{Dual}
The symbol of  a partition in  $B_n,C_n$, and $D_n$ theories is
 \be
    \boxed{\left(\ba{@{}c@{}c@{}c@{}c@{}c@{}c@{}c@{}c@{}c@{}} \alpha_1 &&  \alpha_2 &&\cdots  &&  \alpha_m  \\ & \beta_1 && \cdots && \beta_{m+t} && & \ea \right)}\non
\ee
where  $m=(l+(1-(-1)^l)/2)/2$, $l$ is the length of the partition. $t=-1$ for the $B_n$ theory,  $t=0$  for the $C_n$ theory, and  $t=1$ for the $D_n$ theory.
For a partition in the $B_n$ theory or a partition with only old  rows in  the $C_n$ theory, we have $\alpha_i\leq \beta_{i+t}$.  For a partition in $D_n$  theory or a partition with only even  rows in the $C_n$ theory, we have $\alpha_i\geq \beta_{i+t}$.
\end{Pro}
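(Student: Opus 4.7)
The shape of the symbol, with $m$ entries on top and $m+t$ on bottom, is already the content of Proposition \ref{ss}, so the substantive task is to prove the entrywise comparison between $\alpha_i$ and $\beta_{i+t}$. My plan is to read off these inequalities directly from the new closed formulas (\ref{Pbi}), (\ref{Pci}), (\ref{Pdi}), exploiting the fact that each summand in those formulas is of one of two very restricted types.

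First I would separate each closed formula into its \emph{symmetric} part and its \emph{asymmetric} part. The symmetric part consists of the rectangular blocks, which place the same constant $(j-2)/2$ (or $(j-1)/2$ for $C_n$) in $W(j)$ vertically aligned positions of both rows. The asymmetric part consists of the $\Delta W(j)$ corrections that lie entirely in one row---the top row when $\Delta(j+1)=1$, the bottom row when $\Delta(j+1)=0$---together with the standalone first-row term present in (\ref{Pbi}) and (\ref{Pdi}). The symmetric pieces almost preserve any entrywise comparison between the two rows, with discrepancies only at the extreme columns of each rectangle; these discrepancies are precisely what the asymmetric pieces and the first-row term are there to compensate.

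For the $B_n$ theory the first-row summand contributes $0$ to every $\alpha_i$ and $1$ to every $\beta_i$, fixing the baseline $\beta_{i-1}\geq \alpha_i+1$. I would then check, block by block, that each $\Delta W(j)$ correction lands in precisely the column where it absorbs the edge discrepancy of the neighbouring rectangle without ever overshooting $\beta_{i-1}-\alpha_i\geq 0$; the explicit shift $Sp(j+1)$ in (\ref{Pbi}) is what makes these alignments work. The $D_n$ argument is mirror-symmetric, driven by the first-row term of (\ref{Pdi}) which favours $\alpha$. For $C_n$ with only odd rows, $\Delta(j)\equiv 1$, so all asymmetric corrections fall in the bottom row, yielding $\alpha_i\leq \beta_i$; the only-even case is the dual.

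The main obstacle I anticipate is exactly the column-alignment bookkeeping: one must verify that the correction for block $j$ sits in the column adjacent to the rectangle of block $j-1$, so that the net top-row and bottom-row partial sums never cross. This is not combinatorially deep but does require working through the cases distinguished by the parity indicators $\Delta(j)$, $\Delta(j+1)$, and $\Delta(j-1)$ that govern $W(j)$, and I would organise the verification along the four row-parity patterns (eo, ee, oo, oe) already tabulated after (\ref{bzb}) to keep the casework manageable.
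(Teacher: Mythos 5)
Your proposal follows essentially the same route as the paper: the paper's proof also reads the inequalities off the closed formulas (\ref{Pbi}), (\ref{Pci}), (\ref{Pdi}) term by term, noting that the rectangular summands contribute equally to $\alpha_i$ and $\beta_{i+t}$, that the $\Delta W(j)$ corrections contribute at most one to a single row, and that the first-row term supplies the decisive unit in favour of $\beta$ (for $B_n$) or $\alpha$ (for $D_n$), with the $C_n$ cases handled by observing that one of the asymmetric terms vanishes. Your extra attention to the column-alignment bookkeeping is a refinement the paper leaves implicit, not a different argument.
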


  \begin{proof} We discuss the contributions  to each term of  the   pair $(\alpha_i, \beta_{i+t})$ by each term in the closed formula  (\ref{Pbi}).
  \begin{itemize}
                    \item For a partition in the $B_n$ theory,   the second term in the formula (\ref{Pbi}) contribute to $\alpha_i$ and $ \beta_{i+t}$ equality.  The third and the fourth terms contribute  to $\alpha_i$ or $ \beta_{i+t}$  at most one.  While   the first term contribute one to $ \beta_{i+t}$.  So we  draw  the conclusion.

                    \item For a partition with even rows only in the $C_n$ theory,   the second term in the formula (\ref{Pci}) will not appear. So we draw the conclusion.  For a partition with  only odd rows,  the third term in the formula (\ref{Pci}) will not appear. So we draw the conclusion.

                    \item For a partition in the $D_n$ theory,  the second term in the formula (\ref{Pdi}) contribute to $\alpha_i$ and $ \beta_{i+t}$ equality.  The third and the fourth terms contribute to $ \beta_{i+t}$  at most one. While   the first term contribute  to each  $\alpha_i$ one.  So we draw the conclusion.

                  \end{itemize}
\end{proof}

\section{Acknowledgments}
We would like to thank Ming Huang  and Xiaobo Zhuang for  many helpful discussions.  This work of B.Shou was supported by a grant from  the Postdoctoral Foundation of Zhejiang Province. The work of Qiao Wu has been supported by Key Research Center of Philosophy and Social Science of Zhejiang Province: Modern Port Service Industry and Creative Culture Research Center(15JDLG02YB).



\bigskip
\bigskip
\bigskip


\end{document}